\newcommand\numberthis{\addtocounter{equation}{1}\tag{\theequation}}
\newcommand{\eq}[2]{\begin{equation}\label{#1}#2 \end{equation}}
\newcommand{\ra}{\rightarrow} % Fei added
\newcommand{\xra}{\xrightarrow}
\newcommand{\N}{\mathbb{N}}
\newcommand{\Z}{\mathbb{Z}}
\newcommand{\cA}{\mathcal{A}} % added by Fei
\newcommand{\cC}{\mathcal{C}} % added by Fei
\newcommand{\Set}{{\operatorname{\mathrm{Set}}}}
\newcommand{\Ab}{\operatorname{\mathrm{Ab}}}
\newcommand{\Hom}{\operatorname{Hom}}
\newcommand{\Ker}{\operatorname{Ker}}
\renewcommand{\Im}{\operatorname{Im}}
\newcommand{\Mod}{\operatorname{Mod}}
\newcommand{\Top}{\operatorname{Top}}
\newcommand{\bbox}{{\blacksquare}}
\newcommand{\Prof}{\operatorname{Prof}}
\newcommand{\cg}{\operatorname{cg}}
\newcommand{\De}{\operatorname{De}}
\newcommand{\CS}{\operatorname{CS}}
\newcommand{\cgwh}{\operatorname{cgwh}}
\newcommand{\chaus}{\operatorname{chaus}}
\newcommand{\qs}{\operatorname{qs}}
\newcommand{\ML}{\operatorname{ML}}
\newcommand{\TopMod}{\operatorname{TopMod}}
\newcommand{\TopAb}{\operatorname{TopAb}}
\newcommand{\op}{{\operatorname{op}}}
\renewcommand{\lim}{\operatornamewithlimits{\rm{lim}}}
\newcommand{\colim}{\operatornamewithlimits{\rm{colim}}}
\newcommand{\LCA}{\operatorname{LCA}}
\newcommand{\fp}{\operatorname{{fp}}}
\newcommand{\Spec}{\operatorname{Spec}}
\newcommand{\Spa}{\operatorname{Spa}}
\newcommand{\Proj}{\operatorname{Proj}}
\newcommand{\Cond}{\operatorname{Cond}}
\newcommand{\Solid}{\operatorname{Solid}}
\newcommand{\hra}{\hookrightarrow}
\renewcommand{\hat}{\widehat} % added by Fei
\renewcommand{\phi}{{\varphi}}
\newcommand{\ul}{\underline}
\renewcommand{\bar}{\overline}
\newcommand{\Pro}{\operatorname{Pro}}
\newcommand{\Ind}{\operatorname{Ind}}
\theoremstyle{plain}
\newtheorem{prop}{Proposition}[section]
\newtheorem{lem}[prop]{Lemma}
\newtheorem{thm}[prop]{Theorem}
\theoremstyle{definition}
\newtheorem{defn}[prop]{Definition}
\newtheorem{para}[prop]{}
\newtheorem{rmk}[prop]{Remark}
\newcommand{\beq}{\begin{equation}}
\newcommand{\eeq}{\end{equation}}
\numberwithin{equation}{prop}
\title{Notes on Coherent six-functor formalisms: Pro vs Solid}
\author{Fei Ren}
\address{Bergische Universit\"at Wuppertal\\ Gau\ss str. 20, D-42119 Wuppertal, Germany}
\email{renfei@uni-wuppertal.de}
\thanks{ }
\begin{document}
\sloppy

\begin{abstract}
In the classical theory for coherent sheaves, the only missing piece in the Grothendieck six-functor formalism picture is $j_!$ for an open immersion $j$.
Towards fixing this gap,
classically Deligne proposed a construction of $j_!$ by extending the sheaf class to pro sheaves, while Clausen-Scholze provided another solution by extending the sheaf class to solid modules.
In this work, we prove that Deligne's construction coincides with the Clausen-Scholze construction via a natural functor, whose restriction to the full subcategory of Mittag-Leffler pro-systems is fully faithful.
\end{abstract}
\subjclass{14F08} 
\maketitle

\tableofcontents

\section{Introduction}

The Grothendieck six-functor formalism plays a foundational role in modern algebraic geometry, providing a conceptual framework for understanding the interplay between various operations on sheaves across different geometric contexts. 
While this formalism is fully developed in the context of $\ell$-adic constructible sheaves in the étale cohomology in the 1970s, a parallel formalism for coherent sheaves remains incomplete. 
In particular, for an open immersion 
$j: U\hra X$, the exceptional pushforward functor $j_!$
does not exist as a functor between categories of (quasi)coherent sheaves.

There have been various attempts towards building this missing $j_!$ via enlarging the category of coherent sheaves.  
In \cite[Appendix]{Hartshorne_RD}, Deligne constructed a version of $j_!$
by embedding coherent sheaves into the larger category of pro-coherent sheaves. 
More recently, Clausen and Scholze introduced the framework of solid modules, where the enlargement of  the target category allows for a robust treatment of $j_!$ in a derived setting, see \cite[Lecture VIII]{CS}. 
Their theory of solid modules offers a powerful and flexible toolset to do certain computations, yet its relation to the earlier construction of Deligne has remained unclear.

The aim of this paper is to bridge these two constructions. 
We prove that Deligne’s construction of 
$j_!$ via pro-sheaves coincides with the Clausen–Scholze version defined via solid modules, through a natural comparison functor $R\Phi$. 
Moreover, $R\Phi$ is the derived functor of $\Phi$, whose restriction to
the subcategory of Mittag-Leffler pro-systems is fully faithful. This result provides a conceptual and structural unification of the two approaches and reinforces the compatibility of classical techniques with modern condensed tools.
\begin{thm}[{\Cref{thm1}, \Cref{thm2}}]
Let $A$ be a commutative ring with $1$.
\begin{enumerate} 
\item 
There is a natural functor 
$$\Phi:\Pro_{\N}\Mod(A)\ra \Solid(A), \quad ``\lim_\N" M_i\mapsto \ul{\lim_\N M_i}$$
from the category $\Pro_{\N}\Mod(A)$ of $\N^\op$-indexed pro-$A$-modules to the category $\Solid(A)$ of solid $A$-modules.
The functor $\Phi$ is  left exact, and its restriction to the full subcategory of Mittag-Leffler pro-$A$-modules
$$\Phi^{\ML}:\Pro_{\N}^{\ML}\Mod(A)\ra \Solid(A)$$
is exact and fully faithful.
\item 
Deligne's construction for $j_!$ is compatible with the one built by Clausen-Scholze via the right derived functor $R\Phi$ of $\Phi$. (See \Cref{thm2} for the precise statement.)
\end{enumerate}
\end{thm}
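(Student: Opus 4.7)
The plan for part (1) is as follows. I define $\Phi$ objectwise as the composite of the discrete-module embedding $\Mod(A)\hra\Solid(A)$, $M\mapsto\ul{M}$, with the sequential limit $\lim_\N\colon\Solid(A)^\N\to\Solid(A)$. Functoriality on morphisms of pro-systems is immediate from the universal property of the limit (a pro-morphism $(M_i)\to(N_j)$ is a coherent family of maps between the $M_i$ and $N_j$ which become maps between discrete solid modules), and left exactness of $\Phi$ follows from exactness of the discrete embedding together with left exactness of $\lim_\N$.

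For the Mittag-Leffler subcategory, exactness of $\Phi^{\ML}$ reduces to the vanishing $R^1\lim_\N=0$ on ML systems in $\Solid(A)$. After passing to an equivalent system with surjective transition maps (the standard ML reformulation), $R^1\lim$ vanishes on each profinite test object by the classical surjective-tower argument, so the sequential limit is exact on short exact sequences of ML systems.

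The main content of (1) is full faithfulness of $\Phi^{\ML}$. Using that $\Hom$ out of a fixed source commutes with the target limit, this reduces to proving the identity
$$\Hom_{\Solid(A)}\bigl(\lim_i\ul{M_i},\,\ul{N}\bigr)\;\cong\;\colim_i \Hom_A(M_i,N)$$
for any $\N$-indexed ML system $(M_i)$ and any $A$-module $N$. This is the main obstacle. The strategy is to replace $(M_i)$ by a surjective cofinal subsystem and then evaluate at a profinite test object $S=\lim_\alpha S_\alpha$: a morphism in $\Solid(A)$ to the discrete solid module $\ul{N}$ amounts to a compatible family of locally constant maps $(\lim_i M_i)\times S\to N$, and the surjective-ML structure on $(M_i)$ forces each such map to factor through some finite stage $M_{i_0}\times S$. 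This is the solid-category incarnation of the classical fact that a continuous homomorphism from a surjective countable cofiltered limit of discrete modules to a discrete module must factor through some finite stage.

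For part (2), once $\Phi^{\ML}$ is known to be exact and fully faithful, the comparison of Deligne's and Clausen-Scholze's $j_!$ is obtained by transporting Deligne's construction into $\Solid(A)$ through $R\Phi$ and matching it to the Clausen-Scholze one. Deligne's $j_!^{\De}(F)$ for a coherent sheaf $F$ on an open immersion $j\colon U\hra X$ with closed complement $Z$ is the pro-system $``\lim_n"\,\cI_Z^n\cdot\tilde F$ for a coherent extension $\tilde F$ of $F$; this system is ML since the transition maps are inclusions of subsheaves. Applying $\Phi^{\ML}$ yields a solid sheaf that fits into a canonical fiber sequence with $i_*i^*\tilde F$, which is precisely the defining fiber sequence for the Clausen-Scholze $j_!^{\CS}$; equivalently, both objects represent the same left adjoint to $j^*$. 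Passing to the derived functor $R\Phi$ then gives $R\Phi(j_!^{\De} F)\simeq j_!^{\CS}\Phi(F)$ in the derived solid category, where one further uses compatibility of $\Phi$ with the $(j^*,j_*)$-data on both sides to promote the comparison from a single sheaf to pro-complexes.
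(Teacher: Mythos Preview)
Your treatment of part (1) is essentially correct and parallel to the paper's. The paper factors $\Phi$ through the intermediate category $\TopMod(A)^{\cgwh}$: it first proves full faithfulness of $``\lim"M_i\mapsto\lim M_i$ on ML pro-systems at the topological-module level, then invokes the Clausen--Scholze full faithfulness of $M\mapsto\ul M$ on cgwh topological modules. Your direct approach in $\Solid(A)$ collapses to the same argument once one knows $\ul{\lim_\N M_i}\simeq\lim_\N\ul{M_i}$ and that maps into a discrete solid module correspond to continuous maps from the prodiscrete source. Your exactness argument via sectionwise $R^1\lim=0$ on ML towers is equivalent to the paper's remark that ML towers preserve surjections of condensed presheaves.

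Part (2), however, contains a genuine error. You claim that Deligne's pro-system $``\lim_n"\,\cI_Z^n\cdot\tilde F$ is Mittag--Leffler ``since the transition maps are inclusions of subsheaves.'' This is false: injectivity of the transition maps $f^{n+1}M'\hra f^n M'$ means the images $\Im(f^{n+k}M'\to f^n M')=f^{n+k}M'$ form a strictly decreasing chain in general (take $A=\Z$, $f=p$, $M'=\Z$), so the system is \emph{not} ML. The paper explicitly flags this point, and it is precisely why $R\Phi$ rather than $\Phi^{\ML}$ must be used. Your plan to transport the comparison through the fully faithful $\Phi^{\ML}$ therefore collapses at the outset.

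The paper's proof of part (2) is instead a direct computation in the affine case $j\colon\Spec A[1/f]\hra\Spec A$ with $A$ noetherian. One identifies $R\Phi(j_!^{\De}M)$ with the fiber of $\ul{M'}\to\lim_n\ul{M'/f^nM'}$, and separately computes $j_!^{\CS}(\ul M)$ via the Clausen--Scholze formula $(-)\otimes^L_{(A[T],A)_\bbox}(\ul{A((T^{-1}))}/\ul{A[T]})[-1]$ specialized along $T\mapsto 1/f$, obtaining the fiber of $\ul M\to(\lim_n\ul{M'/f^nM'})[1/f]$. Matching these two fibers requires a nontrivial condensed flatness statement ($\ul A\to\ul{\hat A}$ is flat in $\Cond(A)$) and an explicit comparison of kernels and cokernels. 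Your fiber-sequence/adjoint sketch supplies none of these ingredients, and the ``same left adjoint to $j^*$'' heuristic does not apply: $\Phi$ is neither an equivalence nor known a priori to intertwine the two adjunctions.
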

This comparison result shows the foundational coherence of these theories. 
Although in the current stage, the theorem is only stated for 
a special type of open immersions, we believe that it gives an interesting idea.
We expect the theorem remains true for more general open immersion $j$, and we plan to glue the functor $R\Phi$ in the forthcoming work.

\subsection{Notations and first descriptions}
Let $A$ be a discrete commutative ring with $1$.
Let $\cC$ be a category. Let $\kappa$ be an uncountable strong limit cardinal.
\begin{itemize}
\item 
$\Top$: the category of topological spaces.
\item 
$\Top^{\cg}$: the full subcategory of $\Top$ consisting of compactly generated topological spaces.
\item 
$\Top^{\cgwh}$: the full subcategory of $\Top$ consisting of compactly generated weakly Hausdorff spaces.
\begin{itemize}
\item 
It is complete and cocomplete \cite[Corollary 2.23, Proposition 2.30]{Strickland_CGHW}.
The limits in $\Top^{\cgwh}$ are given by the k-ification of the corresponding limits in $\Top$.
The colimits in $\Top^{\cgwh}$ are given by a quotient space of the corresponding colimits in $\Top$.
\end{itemize}
\item 
$\Ab$: The category of abelian groups.
\item 
$\TopAb$: the category of topological abelian groups.
\item 
$\TopAb^\delta$: the category of discrete abelian groups.
It is canonically equivalent to $\Ab$.
\item 
$\TopAb^{\chaus}$: the category of topological abelian groups whose underlying topological spaces are compact 
\footnote{For us, the term ``\textit{compact}" does not require being Hausdorff.}
Hausdorff.
\item 
$\Mod(A)$: the category of $A$-modules.
\item 
$\Mod(A)^{\fp}$: the category of finitely presented $A$-modules.

\item 
$\TopMod(A)$: the category of topological $A$-modules.
\item 
$\TopMod(A)^{\cgwh}$: the full subcategory of $\TopMod(A)$ consisting of those objects with compactly generated weakly Hausdorff underlying topological spaces.
\item 
$\TopMod(A)^\delta$: the full subcategory of $\TopMod(A)$ consisting of those objects with discrete underlying topological spaces.
It is canonically equivalent to $\Mod(A)$.
\item 
$\LCA$: the category of locally compact Hausdorff abelian groups. 
% It is a quasiabelian category.
\item 
$\Pro\cC$: the category of pro-objects in $\cC$ (indexed by any filtered category). 
We follow the same definition as in \cite[\S A.2]{AM}. 
% Note that we do not assume the transition maps to be epimorphisms, and this differs from \cite[Definition 4.1]{BGW}.
\begin{itemize}
\item 
By \cite[A. Corollary 3.2]{AM}, any morphism between two objects $``\lim_i"X_i,``\lim_j"Y_j$ in $\Pro\cC$ can be represented by a levelwise morphism with the same index category.
\item 
The Yoneda functor identifies $\Pro\cC$ as a full subcategory of the functor category ${\rm Fun}(\cC,\Set)^{\op}$. 
\end{itemize}
\item 
$\Pro^{\ML}\cC$, when $\cC$ admits kernels and cokernels (so that images are defined): the full subcategory of $\Pro\cC$ consisting of pro-systems satisfying the Mittag-Leffler condition. 
Namely, a pro-system $``\lim_I X_i"\in\Pro\cC$ lies in $\Pro^{\ML}\cC$ if and only if
for any $i\in I$, there exists a $j\in I$ such that for all $l\ge j$,
$\Im(\phi_{li})=\Im(\phi_{ji})$ holds. ($\phi_{ji}:X_j\ra X_i$ refers to the structure maps in the pro-system.)
\item 
$\Pro_{\N}\cC$: the full subcategory of $\Pro\cC$ generated by $\N^\op$-indexed pro-objects.
\begin{itemize}
\item 
By possibly interleaving the indices of $j\in \N$, any morphism between two objects $``\lim_i"X_i,``\lim_j"Y_j$ in $\Pro_{\N}\cC$ is represented by a levelwise morphism, i.e., an object in $\Pro_{\N}\Hom_{\cC}(X_i,Y_i)$
\end{itemize}
\item 
$\Ind\cC$: the category of ind-objects in $\cC$. By definition, it is $(\Pro\cC^{\op})^{\op}.$
\item 
$\Ind_\N\cC$: the full subcategory of $\Ind\cC$ generated by $\N$-indexed ind-objects.
\item 
$\Proj_\kappa$: the site of extremally  disconnected profinite sets of size $<\kappa$ with covers given by finite families of jointly surjective maps. 
\item 
$\Prof_\kappa$: the site of profinite sets of size $<\kappa$ with covers given by finite families of jointly surjective maps. 
\item 
$\Cond:=\colim_\kappa \Cond_\kappa$: the category of condensed sets as defined in \cite[p. 15]{CS}.
Similarly, 
$\Cond(A):=\colim_\kappa \Cond_\kappa(A)$ is the category of condensed $A$-modules, etc.
\item 
$\Cond^{\qs}$, $\Cond(A)^{\qs}$, etc.: the full subcategory of condensed sets / $A$-modules / etc.  whose underlying condensed set is quasiseparated.
\item 
$\Solid(A)$ (where $A$ is a finite type $\Z$-algebra): the abelian category of solid $A$-modules as defined in \cite[Proposition 7.5(i)]{CS} under the notation $\Mod_A^{\rm cond}$. It is a full subcategory of $\Cond(A)$.
\item 
$D(\Solid(A))$ or $D(A_\bbox)$ (where $A$ is a finite type $\Z$-algebra): the derived category of solid $A$-modules.
\item 
$D(\cA)$ (where $\cA$ is an analytic ring as defined in \cite[Definition 7.4]{CS}): the derived category of solid $\cA$-modules.
% \item 
% ${\lim}^{\rm superscript}$ / ${\colim}^{\rm superscript}$ , etc.: the limit / colimit etc. in the corresponding category described by the superscript.
\end{itemize}

\subsection{Acknowledgments}
The author thanks Andreas Bode for many intense discussions on this topic and for providing detailed feedback on an earlier draft of this paper.
He is grateful to Kay Rülling for helpful discussions and for pointing out an error in an earlier version.
Thanks are also due to Oliver Braunling for several conversations regarding general topological groups, and for pointing out the references \cite{Prosmans} and \cite{Wilson}.
In addition, the author thanks Chirantan Chowdhury, Jorge Galindo Pastor, Joshua Mundinger, Mikhail Tkatchenko, and Matthias Wendt for helpful email exchanges.
Finally, the author is highly grateful to the anonymous referee for a detailed report, which provided the opportunity to clarify and carry out many of the detailed arguments.

\section{Full faithfulness}

\begin{defn}
Define a natural functor
\eq{Phi1}{\Phi_1:\Pro_\N\Mod(A)\xra{} \TopMod(A),\quad``\lim " M_i\ra \lim M_i.}
Here the $\lim M_i$ refers to the limit in the category $\TopMod(A)$, i.e., the inverse limit of $M_i$ as modules equipped with the inverse limit topology of discrete spaces.
\end{defn}

\begin{prop}\label{Phi1FF}
The following restriction of $\Phi_1$
\eq{{Phi1ML}}{
\Pro^{\ML}_{\N}\Mod(A)\subset \Pro_\N\Mod(A)\xra{\Phi_1} \TopMod(A),\quad``\lim " M_i\ra \lim M_i}
is fully faithful.
\end{prop}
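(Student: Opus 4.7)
The proof naturally splits into faithfulness and fullness, and the main technical device is to exploit the Mittag-Leffler condition to reduce to pro-systems with surjective transition maps. For a ML pro-system $(M_i)_{i\in I}$, the stable-image subsystem $M_i^{\rm st}:=\bigcap_{j\ge i}\phi_{ji}(M_j)$ is well-defined (the intersection stabilizes by ML), the inclusion $(M_i^{\rm st})\hookrightarrow (M_i)$ is an isomorphism in $\Pro$, and the restricted transition maps $\phi_{ji}|_{M_j^{\rm st}}:M_j^{\rm st}\twoheadrightarrow M_i^{\rm st}$ are surjective. I therefore replace both $(M_i)$ and $(N_j)$ by their stable-image subsystems and assume all transition maps are surjective. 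For such systems with totally ordered (in particular countably cofinal) index, the canonical projection $p_i:\lim_j M_j\to M_i$ is surjective, and $\{\ker(p_i)\}_i$ forms a fundamental system of open neighborhoods of $0$ in $\lim_j M_j$.

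\textbf{Faithfulness.} Suppose $f:(M_i)\to (N_j)$ in $\Pro$ satisfies $\Phi_1(f)=0$. Using the levelwise representation of pro-morphisms available for totally ordered index systems, write $f$ as a compatible family $(f_j:M_{k(j)}\to N_j)_j$. For each $j$, the identity $f_j\circ p_{k(j)}^M=p_j^N\circ \Phi_1(f)=0$ together with the surjectivity of $p_{k(j)}^M$ forces $f_j=0$; hence $f=0$ in $\Pro$.

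\textbf{Fullness.} Given a continuous $A$-linear map $g:\lim_i M_i\to \lim_j N_j$, for each $j$ the composite $p_j^N\circ g:\lim_i M_i\to N_j$ is continuous into a discrete module, so $\ker(p_j^N g)$ is open and hence contains $\ker(p_{k(j)}^M)$ for some $k(j)\in I$, which by monotonicity I may arrange to be order-preserving in $j$. Surjectivity of $p_{k(j)}^M$ yields a unique $A$-linear $g_j:M_{k(j)}\to N_j$ with $g_j\circ p_{k(j)}^M=p_j^N\circ g$. To check that $(g_j)_j$ defines a pro-morphism, observe that for $j'\ge j$ the two maps $\phi^N_{j'j}\circ g_{j'}$ and $g_j\circ \phi^M_{k(j'),k(j)}$ become equal after precomposition with the surjective $p_{k(j')}^M$ (both composites equal $p_j^N\circ g$), hence are equal. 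The resulting pro-morphism $\widetilde{g}:(M_{k(j)})_j\to (N_j)_j$ satisfies $\Phi_1(\widetilde{g})=g$ by construction, since both maps agree after composition with each $p_j^N$. The main obstacle lies precisely in this compatibility step: without the reduction to surjective transitions, the $g_j$'s would only be defined on the image of the projection rather than on all of $M_{k(j)}$, and the compatibility check would fail. This is presumably the subtle point addressed by the stable-image reduction (acknowledged as an earlier oversight pointed out by J.\ Mundinger).
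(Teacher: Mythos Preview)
Your overall strategy matches the paper's: reduce via the open-kernel property to factoring a continuous map $\lim_I M_i\to N_j$ through a single projection, and then use the Mittag--Leffler condition to land in some $M_k$. Your preliminary passage to the stable-image subsystem (so that all transition maps become surjective) is a clean repackaging of what the paper accomplishes inline via the stable images $\phi_{ki}(M_k)$; the paper's diagram with the dashed arrow $M_k\dashrightarrow\Im(p_{I'})$ is exactly your factorization $g_j\circ p_{k(j)}^M=p_j^N\circ g$ after the surjective reduction.

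One point, however, is genuinely wrong: the parenthetical ``(in particular countably cofinal)'' is false---$\omega_1$ is totally ordered but not countably cofinal. This is not cosmetic, since your argument leans on the surjectivity of $p_i:\lim_j M_j\to M_i$ in several places (to conclude $f_j=0$ in faithfulness, to obtain the \emph{unique} $g_j$ in fullness, and to run the compatibility check), and the textbook proof of that surjectivity does pass through countable cofinality. The paper asserts the needed lifting (``one can always find a lift $\tilde m\in\lim_I M_i$'') directly from the total order of $I$, without any cofinality hypothesis; you should either justify the surjectivity along those lines or, if you believe countable cofinality is actually required, flag the restriction explicitly rather than claim it follows from total ordering. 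For the principal case $\Pro_\N$ used later in the paper this is of course moot.
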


\begin{proof}
We prove something slightly stronger:
Let $``\lim_{i\in \N}" M_i\in \Pro^{\ML}_{\N}\Mod(A), ``\lim_J" N_j\in \Pro\Mod(A)$, the category of pro-$A$-modules with any index set. Then
\begin{align*} 
\Hom_{\Pro\Mod(A)}(``\lim_{i\in \N}" M_i,``\lim_J"N_j)
&\stackrel{(1)}{=}\lim_J \colim_{i\in \N} \Hom_{\Mod(A)}(M_i,N_j)
\\
&\stackrel{(2)}{=}\lim_J \Hom_{\TopMod(A)}(\lim_{i\in \N} M_i,N_j)\\
&\stackrel{(3)}{=}\Hom_{\TopMod(A)}(\lim_{i\in \N} M_i,\lim_J N_j).
\end{align*}
Here 
(1) follows from the definition of the pro-category.     

(2) 
We show that the canonical map 
$$\colim_{i\in \N} \Hom_{\Mod(A)}(M_i, N_j) \to \Hom_{\TopMod(A)}(\lim_{i\in \N} M_i, N_j)$$
is a bijection.

\textit{Surjectivity}.
Let $f: \lim_{i\in \N} M_i \to N_j$ be a continuous map. Since the target space $N_j$ is discrete, $\{0\}$ is open, so $\Ker(f)$ is an open neighborhood of $0$ in $\lim_{i\in \N} M_i$. 
By the definition of the inverse limit topology, a basic open neighborhood of $0$ is a finite intersection of sets of the form $\Ker(p_i)$. Thus, there exists a finite subset $I' \subset \N$ such that $\bigcap_{i \in I'} \Ker(p_i) \subset \Ker(f)$. Because $\N$ is totally ordered, $I'$ has a maximum element $k$. For any $i \in I'$, $p_i$ factors through $p_k$, meaning $\Ker(p_k) \subset \Ker(p_i)$. Thus $\Ker(p_k) \subset \Ker(f)$.
This inclusion means $f$ factors through the image of $p_k$, yielding a well-defined map 
$$\bar{f}: \Im(p_k) \to N_j.$$
Next, we use the Mittag-Leffler condition. Because $``\lim_{i\in \N}" M_i$ is Mittag-Leffler, there exists an index $l \ge k$ such that for all $m \ge l$, $\Im(\phi_{mk}) = \Im(\phi_{lk})$. 
The Mittag-Leffler condition, together with the countable index set, guarantees that the 
projection map $p_k:\lim_{i\in \N} M_i\ra M_k$ maps
exactly onto this stable image, namely 
$$\Im(p_k) = \Im(\phi_{lk}).$$
Therefore, our map $\bar{f}$ can be rewritten as
$$\bar f: \Im(\phi_{lk}) \ra N_j.$$
Define
$$f_l = \bar{f} \circ \phi_{lk}: M_l \to N_j.$$
This $f_l\in \colim_{i\in \N} \Hom_{\Mod(A)}(M_i, N_j)$ clearly maps to $f$ under the canonical map, proving surjectivity.

\textit{Injectivity.}
Suppose two maps $g_1, g_2: M_k \to N_j$ have the same image in $\Hom_{\TopMod(A)}(\lim_{i\in \N} M_i, N_j)$. Let $g = g_1 - g_2$. Then 
$$g \circ p_k = 0.$$
By the same Mittag-Leffler stabilization argument, there exists an $l \ge k$ such that $\Im(\phi_{lk}) = \Im(p_k)$. 
Therefore, $g \circ \phi_{lk} = 0$, which means 
$$g_1 \circ \phi_{lk} = g_2 \circ \phi_{lk}.$$
This implies $g_1$ and $g_2$ define the same element in the colimit $\colim_{i\in \N} \Hom_{\Mod(A)}(M_i, N_j)$, proving injectivity.

(3)
follows from the universal property of $\lim_J$.

\end{proof}

\begin{rmk}
\begin{enumerate}
    \item 
One might want such a functor to land in $\LCA$, but  $\Phi_1$ does not satisfy this property. 
The key point is that the limit of locally compact spaces in $\Top$ are not necessarily locally compact.
For example, if we equip $\prod_\N \Z$ with the Tychonoff product topology, then this is a limit of locally compact abelian groups (i.e., $\prod^n_{i=1} \Z$), but itself is not locally compact, as a countable direct product $\prod G_n$ of locally compact $G_n$'s is locally compact if and only if  all $G_n$ are compact except for finitely many $n$.
\item 
If $I$ is an uncountable totally ordered set, an inverse system can satisfy the Mittag-Leffler condition while the big enough projection map does not surject to the stable image of transition maps. 
We thank an anonymous referee for pointing this out.

An example is the following.
Let $I = \omega_1$, the totally ordered set of countable ordinals. 
For each $\alpha < \omega_1$, define the set $B_\alpha$ as follows:
$$ B_\alpha = \{ f: \alpha \to \mathbb{N} \mid f \text{ is injective and } \mathbb{N} \setminus \Im(f) \text{ is infinite} \} $$
Since every $\alpha < \omega_1$ is countable by definition, such injective functions always exist, so $B_\alpha$ is non-empty. 
For $\alpha \le \beta < \omega_1$, the transition map 
$$\pi_{\beta\alpha}: B_\beta \to B_\alpha$$
is simply the restriction $f \mapsto f|_\alpha$.
Such a $\pi_{\beta\alpha}$ is always surjective: 
Because the complement of the image of any $f \in B_\alpha$ is infinite, 
we can always extend $f$ injectively to the countable set $\beta \setminus \alpha$ while still leaving an infinite complement. 
In particular, this system $\{B_\alpha\}_{\alpha<\omega_1}$ is Mittag-Leffler.

Now, let $M_\alpha = \mathbb{Z}^{(B_\alpha)}$ be the free abelian group generated by the set $B_\alpha$. 
The transition maps 
$$\phi_{\beta\alpha}: M_\beta \to M_\alpha$$
are the linear extensions of $\pi_{\beta\alpha}$,.
So they are also surjective, which means
$\Im(\phi_{\beta \alpha}) = M_\alpha$ for all $\beta \ge \alpha$. 
Hence, this system $\{M_\alpha\}_{\alpha<\omega_1}$ is Mittag-Leffler.

We show that 
$$\lim_{\omega_1} M_\alpha = 0.$$
Suppose there is a non-zero element $x \in \lim_{\omega_1} M_\alpha$. 
Since $x$ is non-zero and $I$ is totally ordered, there exists an $\alpha<\omega_1$ such that for all $\beta\ge \alpha$, the projection $x_\beta$ of $x$ in $M_\beta$  are nonzero. 
In a free abelian group, any element $x_\alpha$ is a finite linear combination of basis elements. Let 
$$S_\beta:=\{ f\in B_\beta\mid 
\text{coefficient of $f$ in $x_\beta$ is nonzero}
\}.$$
$S_\beta$ is 
\begin{itemize} 
\item 
nonempty for big enough $\beta$, because $x_\beta$ is nonzero as long as $\beta\ge \alpha$; and 
\item 
finite, because in a free abelian group, any element $x_\alpha$ is a finite linear combination of basis elements. 
\end{itemize}
Because the transition maps of 
$\{M_\alpha\}_{\alpha<\omega_1}$
are linear and send basis elements to basis elements, 
the map $\pi_{\beta\alpha}$ with $\beta\ge \alpha$ must map $S_\beta$ surjectively onto $S_\alpha$. 
This makes
$$\{S_\beta\}_{\beta<\omega_1}$$ 
an inverse system of non-empty finite discrete spaces with surjective transition maps. 
By \cite[Proposition 1.1.4]{RZ2010}, its inverse limit is non-empty. 
This means there exists a compatible family of basis elements, giving us a well-defined function $$F: \omega_1 \to \mathbb{N}$$
that is injective on every countable ordinal, meaning $F$ is itself injective. 
But it is impossible to inject the uncountable set $\omega_1$ into the countable set $\mathbb{N}$. 
This contradiction proves that $x$ must be $0$, so $\lim_{\omega_1} M_\alpha = 0$.

For this example, the image  $\Im(p_\beta) = \{0\}$ for $\beta$ big enough, whereas the Mittag-Leffler stable image $\Im(\pi_{\beta\alpha}) = M_k \neq \{0\}$. 

\end{enumerate}
\end{rmk}

\begin{prop}\label{Phi1landsincgwh}
The functor $$\Phi_1:\Pro_{\N}\Mod(A)\xra{\eqref{Phi1}} \TopMod(A)$$ lands in  $\Top\Mod(A)^{\cgwh}$.
\end{prop}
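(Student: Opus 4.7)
The plan is to show that for every $\N$-indexed pro-system $``\lim_n"M_n\in\Pro_\N\Mod(A)$, the topological module $\Phi_1(``\lim_n"M_n)=\lim_n M_n$ is in fact metrizable. Since every metrizable space is first countable (hence a $k$-space, i.e., compactly generated in the sense of \cite{Strickland_CGHW}) and Hausdorff (hence weakly Hausdorff), this will suffice to place $\lim_n M_n$ inside $\TopMod(A)^{\cgwh}$.

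The first step is to observe that the countable Tychonoff product $\prod_{n\in\N}M_n$ of discrete $A$-modules is itself metrizable. Taking $\delta_n$ to be the $\{0,1\}$-valued discrete metric on $M_n$, the standard formula
$$d\big((x_n),(y_n)\big)=\sum_{n\in\N}2^{-n}\delta_n(x_n,y_n)$$
defines a metric on $\prod_n M_n$ inducing exactly the product topology.

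The second step is to identify $\lim_n M_n$, carrying the topology it receives as an object of $\TopMod(A)$, with the submodule
$$L:=\{(x_n)\in\prod_n M_n:\phi_{m,n}(x_m)=x_n\text{ for all }m\geq n\}$$
equipped with the subspace topology from $\prod_n M_n$. The agreement of topologies is immediate from the universal property of limits in $\TopMod(A)$, and the closedness of $L$ follows because each defining condition $\phi_{m,n}(x_m)=x_n$ is a closed condition, being the preimage of the diagonal in the Hausdorff space $M_n\times M_n$ under a continuous map. Combining the two steps, $\lim_n M_n$ is a closed subspace of a metrizable space, hence metrizable, and the conclusion follows.

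There is essentially no obstacle here: the statement is a topological restatement of the classical fact that countable cofiltered limits of discrete sets (e.g., the Cantor set, or the $p$-adic integers) are Polish spaces. The only mild point to verify is that the limit in $\TopMod(A)$ is realized as the Tychonoff subspace and does not require any k-ification, but this is automatic since the subspace is already cgwh.
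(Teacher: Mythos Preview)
Your proof is correct and follows the same overall strategy as the paper: reduce to showing that a countable product $\prod_{n\in\N} M_n$ of discrete spaces is metrizable, and then use that subspaces of metrizable spaces are metrizable. The only difference is in how metrizability of the product is established. You write down the standard explicit metric $d\big((x_n),(y_n)\big)=\sum_n 2^{-n}\delta_n(x_n,y_n)$, which works for arbitrary discrete $M_n$. The paper instead appeals to Urysohn's metrization theorem via the claim that the product is second-countable as a countable product of second-countable spaces --- but a discrete space is second-countable only when it is countable, so that argument does not cover uncountable $M_n$. Your direct construction is therefore both simpler and more robust. (The closedness of $L$ you verify is not actually needed, since every subspace of a metrizable space is metrizable, but it does no harm.)
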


\begin{proof}
Any prodiscrete $A$-module with the index category $\N$ is metrizable (and hence compactly generated and weakly Hausdorff).
In fact, as any subspace of a metrizable space is metrizable, it suffices to show that any $\N$-indexed product of discrete spaces is metrizable.
This holds because every discrete space is trivially metrizable, and a countable product of metrizable spaces is metrizable by \cite[IX. Corollary 7.3]{Dugundji66}.
\end{proof}

\begin{rmk}
The proposition is false for general pro-systems (as opposed to $\N^\op$-indexed pro-systems). 
The key point is that prodiscrete spaces are not necessarily compactly generated. 
For example, $\prod_{\aleph_1} \Z$ (where $\aleph_1$ is the first uncountable cardinal) is not compactly generated.
\end{rmk}

\begin{lem}\label{liminProNC}
\begin{enumerate}
\item 
For any category $\cC$, $\Pro_{\N}\cC$ admits $\N$-indexed limits.
\item 
If $\cC$ admits all finite limits (resp. finite colimits), then so does $\Pro_{\N}\cC$.
\item 
If $\cC$ is an abelian category, then so is $\Pro_{\N}\cC$.
\end{enumerate}

\end{lem}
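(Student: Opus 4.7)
The plan is to reduce each of (1)--(3) to a levelwise computation in $\cC$, by iteratively invoking the levelwise representation of morphisms (Artin--Mazur A.3.2 together with $\N \times \N \simeq \N$) noted just before the statement.

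For (1), I would start from a tower $\cdots \to X^{(2)} \to X^{(1)} \to X^{(0)}$ in $\Pro_\N\cC$ with each $X^{(n)} = ``\lim_i" X^{(n)}_i$, fix the indexing of $X^{(0)}$, and inductively re-index $X^{(n+1)}$ along a monotone map $\N \to \N$ so that the structure morphism $X^{(n+1)} \to X^{(n)}$ becomes a level map to the already-fixed indexing of $X^{(n)}$. This produces a functor $F : \N^{\op}\times \N^{\op} \to \cC$ whose rows recover the $X^{(n)}$'s. The candidate limit is then $L := ``\lim_m" F(m, m)$, and its universal property is verified, for $Y = ``\lim_k" Y_k \in \Pro_\N\cC$, by the chain
\[
\lim_n \Hom_{\Pro_\N\cC}(Y, X^{(n)}) = \lim_{(n,i)} \colim_k \Hom_\cC(Y_k, F(n,i)) = \lim_m \colim_k \Hom_\cC(Y_k, F(m,m)) = \Hom_{\Pro_\N\cC}(Y, L),
\]
where the middle equality uses cofinality of the diagonal $\N \to \N \times \N$.

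For (2), a finite diagram $D : \cI \to \Pro_\N\cC$ has only finitely many morphisms, so an analogous iterative re-indexing (which terminates because $\cI$ is finite) lifts $D$ to a functor $\tilde D : \cI \to {\rm Fun}(\N^{\op}, \cC)$; then $``\lim_i" \lim_\cI \tilde D(-, i)$ (resp.\ $``\lim_i" \colim_\cI \tilde D(-,i)$) delivers the limit (resp.\ colimit) of $D$ in $\Pro_\N\cC$. For (3), combining (1)--(2) with the classical fact that $\Pro\cC$ is abelian when $\cC$ is, the inclusion $\Pro_\N\cC \hookrightarrow \Pro\cC$ is a full subcategory closed under finite limits and colimits: zero, biproducts, kernels, and cokernels all stay $\N$-indexed; additivity of $\Hom$-sets is inherited from $\cC$ via the levelwise representation; and the mono-is-kernel / epi-is-cokernel axioms descend from $\Pro\cC$ since the relevant (co)kernels remain $\N$-indexed.

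The main obstacle is the coherent re-indexing needed in (1) and (2): refining the source of a level map to a cofinal subsequence must not break previously achieved levelness. I plan to handle this in (1) by always re-indexing the \emph{source} only (never the already-fixed target) within the induction, and in (2) by a final common refinement across the finitely many morphisms of $\cI$. Once the levelwise presentation is in hand, the universal properties and the abelian axioms follow formally from the corresponding properties in $\cC$.
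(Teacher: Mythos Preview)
Your proposal is correct and follows essentially the same route as the paper: for (1) the paper invokes \cite[A. Proposition 4.4]{AM} to identify the limit with the $\N\times\N$-indexed pro-object (then uses $\N\times\N\simeq\N$), which amounts to exactly your inductive straightening plus diagonal cofinality; for (2) the paper does precisely your levelwise construction. The only cosmetic difference is in (3): the paper checks the abelian axioms directly from the levelwise (co)kernels of (2), whereas you pass through the classical fact that $\Pro\cC$ is abelian and observe that $\Pro_\N\cC$ is a full subcategory closed under finite (co)limits---both arguments are equally short and valid.
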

\begin{proof}
\begin{enumerate}
\item 
% This is a variant of \cite[Proposition A.4.4]{AM}. 
Let $``\lim_{i\in \N}"X^i\in \Pro_{\N}\Pro_{\N}\cC$, where for each $i\in \N$, $X^i:=``\lim_{j\in \N}"X^i_j\in \Pro_{\N}\cC.$
We use the fact that the diagonal $\N\ra \N\times \N$ is cofinal.
Then $$``\lim_{(i,j)\in \N\times \N}"X^i_j$$ is the limit of $``\lim_{i\in \N}"X^i$ in $\Pro\cC$ by \cite[A. Proposition 4.4]{AM}.
In particular, it is the limit of $``\lim_{i\in \N}"X^i$ in the full subcategory $\Pro_{\N}\cC$.
\item 
Suppose $\cC$ admits all finite limits. To show that $\Pro_{\N} \cC$ admits all finite limits, it suffices to show that it admits a terminal object and pullbacks. 
The terminal object is simply the constant pro-system at the terminal object of $\cC$.

For pullbacks, we simply apply the construction in \cite[Proposition 4.2]{AM}.
Let $X \to Z \leftarrow Y$ be a diagram in $\Pro_{\N} \cC$. 
Since the index category $(\bullet \to \bullet \leftarrow \bullet)$ is finite and has no loops, 
we can apply the uniform approximation theorem \cite[A. Proposition 3.3]{AM} to straighten the diagram.
That is, it is isomorphic to a pro-object of diagrams in $\mathcal{C}$, say $``\lim_{e\in E}"(X_e \to Z_e \leftarrow Y_e)$, 
indexed by a filtering category $E$. 
Because $\mathcal{C}$ admits pullbacks, 
we can take the levelwise pullback $ X_e \times_{Z_e} Y_e$ in $\mathcal{C}$. 
The resulting pro-object $``\lim_{e\in E}" X_e \times_{Z_e} Y_e$,
by  \cite[Proposition 4.1]{AM},
is the pullback $X \times_Z Y$ in $\Pro\mathcal{C}$. 
Finally, since the original pro-objects $X, Y, Z$ are indexed by $\mathbb{N}^{\text{op}}$, 
the index category $E$ is a countable cofiltered category. 
Because any countable cofiltered category admits a cofinal functor from $\N^{\op}$, the pullback $W$ is isomorphic to a pro-object indexed by $\mathbb{N}^{\text{op}}$. 
Hence, the pullback lies in $\text{Pro}_{\mathbb{N}}\mathcal{C}$. 

The construction for finite colimits is similar. 
\item 
Since finite limits and finite colimits are constructed levelwise, we can check the axioms of an abelian category from the constructions given in (2).
\end{enumerate}
\end{proof}

\begin{prop}\label{Phi1commlim}
$\Phi_1:\Pro_{\N}\Mod(A)\ra \TopMod(A)^{\cgwh}$ commutes with $\N$-indexed limits and finite limits.
\end{prop}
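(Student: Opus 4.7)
The plan is to compute both sides of the desired identity $\Phi_1(\lim_i X^i) \cong \lim_i \Phi_1(X^i)$ explicitly using the descriptions of limits in $\Pro_\N\Mod(A)$ provided by \Cref{liminProNC}, and to invoke transitivity of initial topologies together with the metrizability supplied by \Cref{Phi1landsincgwh} to match the topologies. The metrizability will also guarantee that the k-ification step used to compute limits in $\TopMod(A)^{\cgwh}$ (as opposed to in $\TopMod(A)$) is vacuous in both cases.

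For $\N$-indexed limits, given a system $(X^i)_{i \in \N}$ in $\Pro_\N\Mod(A)$ with each $X^i = ``\lim_{j\in\N}" X^i_j$, by \Cref{liminProNC}(1) the limit $\lim_i X^i$ in $\Pro_\N\Mod(A)$ is the pro-object $``\lim_{(i,j)\in\N\times\N}" X^i_j$, via the Cantor folding $\N\times\N\simeq\N$. Applying $\Phi_1$ yields $\lim_{(i,j)} X^i_j$ with the prodiscrete topology. On the other hand, the limit $\lim_i \Phi_1(X^i)$ in $\TopMod(A)$ has underlying module $\lim_i\lim_j X^i_j$ equipped with the initial topology with respect to the projections to each $\Phi_1(X^i)$. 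A Fubini-type statement for limits gives an $A$-linear isomorphism $\lim_i\lim_j X^i_j \cong \lim_{(i,j)} X^i_j$; and since each $\Phi_1(X^i)$ itself carries the initial topology with respect to the maps to its discrete quotients $X^i_j$, transitivity of initial topologies identifies the induced topology on $\lim_i\lim_j X^i_j$ with the prodiscrete topology on $\lim_{(i,j)} X^i_j$. As the resulting pro-object lies in $\Pro_\N\Mod(A)$, \Cref{Phi1landsincgwh} ensures this common topological module is metrizable, hence already compactly generated and weakly Hausdorff, so it equals the limit computed in $\TopMod(A)^{\cgwh}$.

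For finite limits, given $(X^i)_{i \in I}$ with $I$ finite and $X^i = ``\lim_j" X^i_j$, I would first use \cite[A. Corollary 3.2]{AM} to straighten indices so that all structure morphisms in the $I$-diagram are represented level-wise in $j$. By \Cref{liminProNC}(2) the limit in $\Pro_\N\Mod(A)$ is then $``\lim_j" \lim_i X^i_j$, where each $\lim_i X^i_j$ is a finite limit of discrete modules, hence still discrete. Applying $\Phi_1$ gives $\lim_j \lim_i X^i_j$ with prodiscrete topology. The limit $\lim_i \Phi_1(X^i)$ computed in $\TopMod(A)$ is a subspace of a finite product of prodiscrete modules, and the same Fubini plus transitivity of initial topologies argument identifies it with the above. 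Metrizability via \Cref{Phi1landsincgwh} makes the k-ification trivial as before.

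The main subtlety, rather than a genuine obstacle, is bookkeeping: I must ensure that the Cantor identification in (1) and the level-straightening in (2) are compatible with the topological comparison, so that the Fubini isomorphism of underlying modules is automatically a homeomorphism. Both cases reduce cleanly to the statement that an inverse limit of inverse limits can be reindexed as a single inverse limit over the product diagram, preserving both the underlying module and the initial topology.
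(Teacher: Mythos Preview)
Your proposal is correct and follows essentially the same approach as the paper, which simply states that the result is clear from the construction in \Cref{liminProNC}. You have spelled out in detail what the paper leaves implicit: the Fubini-type reindexing of limits, the transitivity of initial topologies, and the use of \Cref{Phi1landsincgwh} to make the k-ification step vacuous.
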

\begin{proof}
By \Cref{liminProNC},
$\mathbb{N}$-indexed limits and finite limits in $\text{Pro}_{\mathbb{N}}\text{Mod}(A)$ can be constructed via diagonal systems and levelwise limits, respectively. Applying the functor $\Phi_1$ amounts to taking the inverse limit in $\text{TopMod}(A)^{\text{cgwh}}$. Since limits commute with limits in the category of topological modules, $\Phi_1$ naturally commutes with these limits. 
\end{proof}

\begin{para}\label{Phi2FF}
By {\cite[Theorem 2.16(ii)]{CS}}, the functor
\eq{ul}{\TopMod(A)^{\cgwh}\xra{} \Cond(A)^{\qs},\quad M\mapsto \ul M:=\Hom_{\Top}(-,M)}
is well-defined and fully faithful.
Moreover, it is a right adjoint and hence commutes with all limits.
By \cite[Lemma 4.14]{CS2}, the inclusion functor 
$$\Cond(A)^{\qs}\hra \Cond(A)$$ is a right adjoint and hence commutes with all limits.
We write
\eq{Phi2FFeq1}{
\Phi_2:\TopMod(A)^{\cgwh}\xra{} \Cond(A)
}
the composite functor of \eqref{ul} and $\Cond(A)^{\qs}\hra \Cond(A)$.
$\Phi_2$ is therefore fully faithful and commutes with all limits.
\end{para}
\begin{rmk}
Before we proceed, we summarize some related adjunctions in the diagram below:
\[
\xymatrix@C=5em{
\Top
\ar@<1ex>@{<-^)}[r]^{\text{\cite[p.8]{CS}}}
  \ar@{}[r]|{\perp}
  \ar@<-1ex>@{->}[r]_{\rm k-ification} 
  &
\mathrm{Top}^{\mathrm{cg}} 
\ar@<1ex>@{->}[r]^{\text{ {\cite[Prop. 2.22]{Strickland_CGHW}}}} 
\ar@{}[r]|{\perp}
\ar@<-1ex>@{<-_)}[r]
 &
\mathrm{Top}^{\mathrm{cgwh}}  
\ar@{}[d]|{\dashv}
\ar@<1ex>@{^(->}[d]^{
\text{{\cite[Th. 2.16(ii)]{CS}}}
}
\ar@<-1ex>@{<-}[d]
\\&
\Cond
\ar@<1ex>@{->}[r]
\ar@{}[r]|{\perp}
\ar@<-1ex>@{<-_)}[r]_{\text{\cite[Lem. 4.14]{CS2}}} 
&
\Cond^{\qs}
}
\]
Moreover, the forgetful functors forgetting the algebraic structure (e.g. $\TopAb\ra \Top$, $\TopMod(A)\ra \Top$, etc.) are typically right adjoint and hence commutes with all limits.
\end{rmk}

\begin{lem}\label{ColimCok}
Let $\alpha:M\ra N$ be a strict epimorphism in $\TopMod(A)$, namely, it is  a continuous, surjective homomorphism that is also an open map.
If the underlying topological space of $M$ is 
\begin{enumerate} 
\item 
sequentially complete (i.e., every Cauchy sequence admits a limit point) and $T_1$ (i.e., any singleton is closed)
\item 
first countable (i.e., possesses a countable neighborhood basis at every point),
\end{enumerate} 
then 
$$\ul\alpha: \ul M\ra \ul N$$
is an epimorphism in $\Cond(A).$
\end{lem}

\begin{proof}
It suffices to show that the underlying map of condensed sets $\ul M \ra \ul N$ is an epimorphism. 
This means that for any profinite set $S$ and any continuous map $f: S \ra N$, 
there exists a profinite set $S'$ and a continuous surjection $\pi: S' \ra S$ 
such that the composition $f \circ \pi$ lifts to a continuous map $g: S' \ra M$.

We construct $S'$ via an inverse limit of finite clopen covers. 
To this end, we first fix once and for all a basis of symmetric open neighborhoods 
$$W_0 \supset W_1 \supset W_2 \dots$$
of $0 \in M$, such that  
$$W_{n+1} + W_{n+1} \subset W_n.$$
This is possible because $M$ is first countable.
Consider the fiber product $E = S \times_N M$ in the category of topological spaces. 
Because $M \ra N$ is an open surjection, and both being open and being surjective are properties stable under base change, the projection map $p: E \ra S$ is also an open surjection. 
Let 
\begin{center}
$S_0 = S$\qquad  and \qquad $E_0 = E$. 
\end{center}
Since $p$ is an open surjection, for each $s \in S_0$, we can choose
\begin{itemize} 
\item 
a point $e \in E_0$ such that $p(e) = s$, and 
\item 
an open neighborhood $U_{0,e} \subset E_0$ of $e$ such that its projection $U_{0,M}$ to $M$ is contained in $x_0+W_0$, where $x_0\in M$ is the projection of $e$ to $M$.
\end{itemize} 
Because $p$ is an open map, $p(U_0)$ is an open neighborhood of $s$ in $S_0$. 
Since $S_0$ is totally disconnected,  the clopen sets form a basis for the topology. So we can choose 
\begin{itemize} 
\item 
a clopen neighborhood $V_{0,s} \subset p(U_{0,e})$ of $s$. 
\end{itemize}
By the compactness of $S_0$, a finite number of these clopens, 
$\{V_{0,s_i}\}_{i\in I_0}$, cover $S_0$.
(We call these chosen points $s_i$ ($i\in I_0$) the \textit{center points} in step 0.)
We can disjointly put them together to obtain
$$S_1 := \bigsqcup_{i \in I_0} V_{0,s_i}.$$
The natural map $S_1 \ra S_0$ is a continuous surjection from a profinite set. 
Write $e_i$ our choice of the point in $E_0$ which lies over $s_i$.
Let 
$$E_1 := \bigsqcup_{i \in I_0} (U_{0,e_i} \cap p^{-1}(V_{0,s_i})).$$
The projection $p_1: E_1 \ra S_1$ is again an open surjection.
These maps fit into a commutative diagram
$$\xymatrix{
E_1\ar[r]\ar@{->>}[d]_{p_1}&E_0\ar@{->>}[d]_p\ar[r]&M\ar@{->>}[d]^\alpha\\
S_1\ar@{->>}[r]&S_0\ar[r]^f&N.
}$$
At step $n$, for each $s\in S_n$, we choose 
\begin{itemize}
\item 
a point $e\in E_n$ such that $p(e)=s$,
\item 
an open neighborhood $U_{n,e}\subset E_n$ of $e$ such that its projection $U_{n,M}$ to $M$ is contained in $x_n + W_n$, where $x_n \in M$ is the projection of $e$ to $M$, and
\item 
a clopen neighborhood $V_{n,s}$ of $s$ such that $V_{n,s}\subset p_n(U_{n,e})$.
\end{itemize}
Then we construct
$$S_{n+1}:=\bigsqcup_{i \in I_n} V_{n,s_i},$$
where $\{V_{n,s_i}\}_{i\in I_n}$ is a finite open cover of $S_n$,
and 
$$E_{n+1}:= \bigsqcup_{i \in I_n} (U_{n,e_i} \cap p_n^{-1}(V_{n,s_i})).$$
(We call these chosen points $s_i$ ($i\in I_n$) the \textit{center points} in step $n$.)
Iterating this process inductively gives an inductive system of profinite sets
$$\dots \ra S_2 \ra S_1 \ra S_0 = S,$$ 
and open surjections $p_n: E_n \ra S_n$, which fit in a commutative diagram of the form
$$\xymatrix{
\dots\ar[r]&E_n\ar[d]_{p_n}\ar[r]&\dots\ar[r]&E_1\ar[r]\ar@{->>}[d]_{p_1}&E_0\ar@{->>}[d]_p\ar[r]&M\ar@{->>}[d]^\alpha
\\
\dots\ar[r]&S_n\ar[r]&\dots\ar[r]&S_1\ar@{->>}[r]&S_0\ar[r]^f&N.
}$$
Let $$S' = \lim_n S_n.$$
As an inverse limit of profinite spaces, $S'$ is a profinite set itself.
The induced projection $\pi: S' \ra S$ is a continuous surjection.

We define the map $g:S'\ra M$.
Let $s'=(s_0, s_1, s_2, \dots)\in S' = \lim_n S_n$.
By construction, $s_n$ is not necessarily a center point in  step $n$;
while with the help of $s_{n+1} \in S_{n+1} =
\bigsqcup_{t \in I_n} V_{n,t}$, 
we can find exactly one disjoint component, say 
$V_{n,t^\circ}$,  
containing $s_{n+1}$.
We write $s_n^\circ:=t^\circ.$
This $s_n^{\circ}$ is by construction one of the center points in step $n$.
Having found a center point, the construction further provides us
\begin{itemize}
\item 
a chosen point $e_n^{\circ}\in E_n$ mapping down to $s_n^{\circ}$ via $p_n$,
\item 
an open neighborhood $U_n$ of $e_n^{\circ}$ whose projection $U_{n,M}$ to $M$ is contained in  $x_n+W_n$, where $x_n$ is the projection of $e_n^\circ$ to $M$, and 
\item 
a clopen neighborhood $V_n$ of $s_n^\circ$ in $S_n$ such that $V_{n}\subset p_n(U_n)$.
(This $V_n$ is precisely $V_{n,s_n^\circ}$.)
\end{itemize} 
By the construction of $E_{n+1}$, the point $e^{\circ}_{{n+1}} \in E_{n+1}$ belongs to the component $U_{n} \cap p_n^{-1}(V_{n})$. 
Therefore, the projection of $e_{n+1}^\circ$ to $E_n$ lies in $U_{n}$.  
Because the projection $E_{n+1}\ra M$ factors through the projection $E_n\ra M$, the image of $e_{n+1}^\circ$ in $M$, which is exactly $x_{n+1}$, lies in $U_{n,M}$, the projection of $U_{n}$ to $M$. 
By our choice of $U_{n}$, this projection $U_{n,M}$ is contained in $x_n + W_n$.
This forces:
$$x_{n+1} \in x_n + W_n \implies x_{n+1} - x_n \in W_n.$$
For any $m > n \ge 2$, we have 
\eq{ColimCok_Cauchy}{x_m - x_n = \sum_{k=n}^{m-1} (x_{k+1} - x_k) \in W_n + W_{n+1} + \dots + W_{m-1} \subset W_{n-1}.}
% (We formally set $W_{-1} = W_{-2} = M$ if $n < 1$.)
Therefore the sequence $(x_n)$  is a Cauchy sequence in $M$. 
Since $M$ is sequentially complete and $T_1$, this Cauchy sequence converges to a unique limit point $x \in M$. 
We define the map $g:S'\ra M$ by mapping $s'$ to this limit point $x$.

The map $g:S'\ra M$ so constructed is indeed a lift of $f\circ \pi: S'\ra N$ via $\alpha:M\ra N$, namely, the diagram
$$\xymatrix{
&&M\ar[d]^\alpha\\
S'\ar[r]_\pi\ar[rru]^g&S\ar[r]_f&N
}$$
commutes.
To see this, let us put everything together and label the maps that we are going to use afterwards:
\vspace{3cm}
$$
\xymatrix{
&&\dots\ar[r]&E_n\ar[d]_{p_n}\ar[r]^{}&
\dots\ar[r]&E_1\ar[r]^{}\ar@{->>}[d]_{p_1}&E_0\ar@{->>}[d]_p\ar[r]^{f_M}&M\ar@{->>}[d]^\alpha
\\
S'\ar@/_3ex/[rrr]\ar@/_5ex/[rrrrr]\ar@/_6ex/[rrrrrr]_(.8){\pi}
\ar@/^20ex/[urrrrrrr]^g
\ar@/^5ex/[urrr]_{g_n}\ar@/^11ex/[urrrrr]_{g_1}\ar@/^14ex/[urrrrrr]_{g_0}
&&\dots\ar[r]&S_n\ar[r]^{}&\dots\ar[r]&S_1\ar@{->>}[r]^{}&S_0\ar[r]^f&N.
}\\[5ex]$$
Let $s'=(s_0, s_1, s_2, \dots)\in S' = \lim_n S_n$.
With the same notations from the last paragraph, we have a sequence 
$$s^\circ=(s_0^\circ,s_1^\circ, s_2^\circ,\cdots)\in S',$$
where each $s_n^\circ$ is a center point in step $n$ corresponding to the unique component that hosts $s_{n+1}$ in $S_{n+1}$.
Consider the images of $s'$ and $s^\circ$ in $\prod_n S_0$: 
the image of $s'$ is $(s_0,s_0,s_0,\dots )$; we denote the image of  $s^\circ$ by
$$\bar s^\circ:=(\bar s_0^\circ, \bar s_1^\circ, \bar s_2^\circ,\cdots).$$
Let $f_n:S_n\ra N$, $f_{n,M}:E_n\ra M$ and $\pi_n: S_n\ra S_0$ be the natural projections.
Then $f_n=f\circ \pi_n$.
Since $s_n \in V_{n} \subset p_n(U_{n})$, there exists some $e_n \in U_{n}$ with 
$$p_n(e_n) = s_n.$$
Because the projection $U_{n,M}$ of $U_{n}$ to $M$ is contained in $x_n + W_n$, we have $f_{n,M}(e_n)\in x_n + W_n$.
After applying $\alpha$ we get
$$\alpha(f_{n,M}(e_n)) \in \alpha(x_n) + \alpha(W_n).$$
The commutativity of the square
$$\xymatrix{
E_n\ar[r]^{f_{n,M}}\ar[d]_{p_n}&M\ar[d]^{\alpha}\ar[d]\\
S_n\ar[r]^{f_n}&N
}$$
gives
$$f(\bar s_n^\circ)=\alpha(x_n) 
\text{\quad and \quad}
f(s_0)=\alpha(f_{n,M}(e_n)) 
$$
for all $n$.
Therefore
$$f(s_0) \in f(\bar s_n^\circ) + \alpha(W_n).$$
Since $W_n$ is symmetric and $\alpha$ is additive,
$$f(\bar s_n^\circ) \in f(s_0) + \alpha(W_n).$$
When $n\ra \infty$,  $\alpha(W_n)\ra 0$, hence 
$$ \lim_{n \to \infty} f(\bar s_n^\circ) = f(s_0).$$
Since $\alpha$ is continuous, 
$$\alpha(g(s')) = \lim_{n \to \infty} \alpha(x_n) = \lim_{n \to \infty} f(\bar s_n^\circ) = f(s_0)=f(\pi(s')).$$

Now we show the map $g: S' \to M$ is continuous. 
Let $O\subset M$ be an open subset which contains a point $x $ from the image of $g$ (otherwise there is nothing to show).
Let $s'=(s_0,s_1,\dots)\in S'$ be a preimage of $x$.
We will show that $s'$ has an open neighborhood contained in $g^{-1}(O)$.
Since $O$ is an open neighborhood of $x$ and $\{W_k\}$ forms a basis of neighborhoods of $0$, there exists an integer $n_0 \ge 3$ such that 
$$x + W_{n_0-3} \subset O.$$
By taking the limit $m \to \infty$ in the Cauchy relation \eqref{ColimCok_Cauchy}, we get
$$x - x_n \in \bar{W_{n-1}} .$$
Since $\bar{W_{n-1}} \subset W_{n-1} + W_{n-1} $ by  \cite[Proposition 1.4.4]{AT_TopGp}, and 
$ W_{n-1} + W_{n-1}\subset W_{n-2}$ by our choice of $W_n$'s, we obtain
$$x \in  x_n + W_{n-2}
\text{\quad  for all $n \ge 2$.}$$
In particular, $ x \in x_{n_0}+ W_{n_0-2}$. 
Because $W_{n_0-2}$ is symmetric, 
$x_{n_0} \in x + W_{n_0-2}$.
Consequently, 
$$x_{n_0}+ W_{n_0-2} \subset x+ W_{n_0-2} + W_{n_0-2} \subset x + W_{n_0-3} \subset O.$$
Let $V_{n_0} \subset S_{n_0}$ be the clopen component containing $s_{n_0}$. 
Consider the canonical projection $S'\ra S_{n_0+1}$.
Since this map is continuous and $S_{n_0+1}$ is equipped with the disjoint union topology, the preimage of the component corresponding to $V_{n_0}$, say $V'$, is a clopen subset of $S'$.
By construction, $s' \in V'$.
\begin{center}
\textit{We claim that $g(V')\subset O$.}
\end{center}
This will establish the desired continuity.

It remains to prove the claim.
Namely, we want to show that $g(t')\in O$ for any $t'=(t_0,t_1,\dots)\in V'$.
We can run the same analysis on $t'$ as we did for $s'$.
By the definition of $V'$, the projection $t_{n_0+1}$ of $t'$ to $S_{n_0+1}$ belongs to the exact same component over $V_{n_0}$ as $s_{n_0+1}$. 
This implies that for the sequence $t'$, the chosen clopen neighborhood $V_n(t')\subset S_n$ of $t'$ are identical to that of $s'$ till the step $n_0$, namely 
\begin{center}
$V_n(t')=V_n$ for all $n\le n_0$.
\end{center}
In particular, the projection $x_{n_0}(t')$ to $M$ is exactly $x_{n_0}$.
Just as for $s'$, we have
$$g(t') \in x_{n_0}(t') + W_{n_0-2} = x_{n_0}+ W_{n_0-2}.$$
Since we established $x_{n_0} + W_{n_0-2} \subset O$, it follows directly that $g(t') \in O$.
Therefore $g(V')\subset O$ as claimed.
\end{proof}

\begin{lem}\label{ColimOpenImm}
Let $I$ be a filtered category and $``\colim_{i\in I}" M_i$ be a diagram in $\TopMod(A)$ such that every transition map $M_i\to M_j$ is an open embedding.
Then the natural map
$$ \colim_{i\in I} \ul{M_i} \ra \ul{\colim_{i\in I} M_i} $$
is an isomorphism in $\Cond(A)$.
\end{lem}
\begin{proof}
Let $M = \colim_{i\in I} M_i$ be the colimit in $\TopMod(A)$.
Algebraically, $M$ is the colimit of the underlying $A$-modules.
Topologically, $M$ is equipped with the colimit topology, meaning a subset $U \subset M$ is open if and only if its preimage in $M_i$ is open for all $i \in I$.
Since all transition maps $M_i \to M_j$ are open immersions, each canonical map $M_i \to M$ is also an open immersion.
In particular, $\{M_i\}_{i\in I}$ forms an open cover of $M$.

To show that $\colim_{i\in I} \ul{M_i} \ra \ul{M}$ is an isomorphism in $\Cond(A)$, we evaluate both sides on an arbitrary profinite set $S$.
Colimits in $\Cond(A)$ are computed by sheafifying the presheaf colimit. We claim that the presheaf colimit is already a sheaf and is isomorphic to $\ul{M}$.
That is, we will show that the natural map of sets
$$ \colim_{i\in I} \Hom_{\Top}(S, M_i) \ra \Hom_{\Top}(S, M) $$
is a bijection.

\textit{Injectivity.}
Suppose two continuous maps $f, g: S \to M_i$ become equal in $M$. Since the transition maps are open immersions, they are in particular injective. Hence $M_i \to M$ is injective, which implies $f = g$ already in $M_i$.

\textit{Surjectivity.} 
Let $f: S \to M$ be a continuous map.
Since $S$ is compact, its continuous image $f(S)$ is a compact subset of $M$.
Because $\{M_i\}_{i\in I}$ is an open cover of $M$, the compact set $f(S)$ is covered by finitely many $M_{i_1}, \dots, M_n$.
Since $I$ is filtered, there exists some $k \in I$ such that all $M_{i_1}, \dots, M_n$ map into $M_k$.
Therefore, $f(S) \subset M_k$, which means $f$ factors through a continuous map $S \to M_k$.
This proves the surjectivity, and hence the lemma.
\end{proof}

\begin{lem}\label{IndProExact0Lem}
The restricted functor
$\TopMod(A)^\delta\subset\TopMod(A)^{\cgwh}\xra{\Phi_2} \Cond(A)$
to the full subcategory of discrete $A$-modules is exact.
\end{lem}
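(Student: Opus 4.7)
The plan is to test exactness in $\Cond(A)$ by evaluating on sections over extremally disconnected profinite sets $S \in \Proj_\kappa$. In the setup of \cite{CS}, such $S$ form projective generators of $\Cond(A)$, so a sequence in $\Cond(A)$ is exact if and only if its sequence of sections over each such $S$ is exact as a sequence of $A$-modules. After this reduction, I only need to understand $\Phi_2(M)(S) = \Hom_{\Top}(S, M)$ for discrete $M$: it is the $A$-module of locally constant functions $S \to M$.

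Given a short exact sequence $0 \to M' \to M \to M'' \to 0$ in $\Mod(A) \simeq \TopMod(A)^\delta$, the goal is to show
\begin{equation*}
0 \to \Hom_{\Top}(S, M') \to \Hom_{\Top}(S, M) \to \Hom_{\Top}(S, M'') \to 0
\end{equation*}
is exact in $\Mod(A)$. For left exactness, injectivity on the left is immediate, and a continuous map $f: S \to M$ whose image lies set-theoretically inside $M'$ is automatically continuous into $M'$, because the subspace topology on $M' \subset M$ coincides with the discrete topology on $M'$. For surjectivity on the right, I use compactness of the profinite set $S$: any continuous map $f'': S \to M''$ into the discrete module $M''$ has finite image, and the preimages of its values form a finite clopen partition $S = \bigsqcup_{i=1}^n S_i$ on which $f''|_{S_i}$ is constant equal to some $m_i'' \in M''$. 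Using surjectivity of $M \to M''$ as $A$-modules, I choose set-theoretic lifts $m_i \in M$ and define a locally constant (hence continuous) map $f: S \to M$ by $f|_{S_i} = m_i$; this lifts $f''$.

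The only step I anticipate needing care with is the initial reduction, namely justifying that sectionwise evaluation on extremally disconnected profinite sets is the correct criterion for exactness in $\Cond(A)$. This should be standard from the setup of \cite{CS}. Interestingly, extremal disconnectedness plays no role in the lifting argument itself; only compactness of $S$ is used. The role of $\Proj_\kappa$ is purely that its covering families (finite jointly surjective) are simple enough that the sheaf condition becomes automatic, allowing exactness of condensed abelian groups to be tested sectionwise there.
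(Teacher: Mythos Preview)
Your proof is correct. The reduction step---that exactness in $\Cond(A)$ can be tested on sections over extremally disconnected profinite $S$---is indeed standard from \cite{CS}: the free condensed $A$-modules $A[S]$ for $S\in\Proj_\kappa$ form a family of compact projective generators, so a complex of condensed $A$-modules is exact if and only if it is exact after applying $\Hom(A[S],-)=\Gamma(S,-)$ for each such $S$. The lifting argument using the finite clopen partition coming from compactness of $S$ is clean and complete.

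The paper takes a different, shorter route: it notes that the restricted functor factors through the quasiabelian category $\LCA$ and invokes the result from \cite{CS} that the functor $\LCA\to\Cond(\Ab)$ carries strictly exact sequences to exact sequences. Your argument is more elementary and self-contained, avoiding the detour through quasiabelian categories and the auxiliary category $\LCA$; the paper's argument is faster only because it defers the work to an already-established statement in \cite{CS}. Your closing observation is also accurate: in the surjectivity step only compactness of $S$ is used, and extremal disconnectedness enters solely in the reduction step, where it guarantees projectivity of $A[S]$ so that sectionwise exactness over $\Proj_\kappa$ detects exactness in $\Cond(A)$.
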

\begin{proof}
This is clear, as the restricted $\Phi_2$ factors through $\LCA$, and both $\TopMod(A)^\delta\ra \LCA$ and $\LCA\ra \Cond(A)$ are strict exact functors between quasiabelian categories. See \cite[the last paragraph on p. 26]{CS}
\end{proof}

\begin{lem}\label{noneedshifify}
Fix an uncountable strong limit cardinal $\kappa.$
Then for any $N\in \Top,$
$\Hom_{\Top}(-,N): \Prof_{\kappa}\ra \Set$ 
is a sheaf.
\end{lem}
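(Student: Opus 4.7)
The plan is to verify the sheaf axiom directly. A covering family in $\Prof_\kappa$ is a finite jointly surjective family $\{T_i \to S\}_{i=1}^n$, and the sheaf condition for such a family is equivalent to the sheaf condition for the single map $T := \coprod_{i=1}^n T_i \twoheadrightarrow S$; indeed $T$ is again an object of $\Prof_\kappa$ (a finite coproduct of profinite sets of size $<\kappa$ is a profinite set of size $<\kappa$), the map is surjective, and $T \times_S T$ recovers the pairwise intersections $T_i \times_S T_j$ via its block decomposition. The degenerate case $S = \emptyset$ covered by the empty family reduces to the trivial observation that $\Hom_{\Top}(\emptyset, N)$ is a singleton.

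It therefore suffices to show that for any surjection $\pi: T \twoheadrightarrow S$ in $\Prof_\kappa$, the diagram
$$\Hom_{\Top}(S, N) \to \Hom_{\Top}(T, N) \rightrightarrows \Hom_{\Top}(T \times_S T, N)$$
is an equalizer of sets. The key input is that $\pi$ is a topological quotient map: being a continuous map from a compact space to a Hausdorff one, $\pi$ is automatically closed, and any surjective closed map is a quotient map. Given $f \in \Hom_{\Top}(T, N)$ that equalizes the two projections $T \times_S T \rightrightarrows T$, the map $f$ is constant on the fibers of $\pi$ --- since $(t, t') \in T \times_S T$ precisely when $\pi(t) = \pi(t')$ --- hence factors uniquely set-theoretically as $f = \bar{f} \circ \pi$ for some $\bar{f}: S \to N$, and continuity of $\bar{f}$ follows immediately from the universal property of the quotient topology on $S$. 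Injectivity of $\Hom_{\Top}(S, N) \to \Hom_{\Top}(T, N)$ is clear from the surjectivity of $\pi$.

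The only subtle point is ensuring that $\pi$ is a genuine topological quotient, rather than merely a continuous bijection from $T/{\sim}$ onto $S$; this uses that $S$ is Hausdorff together with the closed-map argument above. Note that no hypothesis on $N$ is required, since the universal property of a quotient map applies to continuous maps into an arbitrary topological space. This is precisely why $\Hom_{\Top}(-, N)$ is already a sheaf on $\Prof_\kappa$ with no sheafification needed.
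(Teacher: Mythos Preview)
Your proof is correct. The paper's own proof simply defers to \cite{CS} (the remarks after Proposition~2.7 and Example~1.5 there), whereas you have unpacked the argument directly: reduce a finite jointly surjective family to a single surjection via the disjoint union, then use that a continuous surjection between compact Hausdorff spaces is a quotient map so that the equalizer condition follows from the universal property of quotients. This is exactly the content behind the cited passages in \cite{CS}, so the two approaches coincide in substance; yours is just self-contained rather than a citation.
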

\begin{proof}
This is clear from \cite[p. 12, remarks after the proof of Proposition 2.7]{CS} and \cite[Example 1.5]{CS}.
\end{proof}

\begin{thm}\label{thm1}
The composite functor
$$\Pro_{\N}\Mod(A)\xra{\Phi_1} \TopMod(A)^{\cgwh}\xra{\Phi_2}\Cond(A), \quad ``\lim_\N" M_i\mapsto \ul{\lim_\N M_i}$$
factors through $\Solid(A)$.
We denote this functor by
$$\Phi:\Pro_{\N}\Mod(A)\ra \Solid(A)$$
and it is left exact.
Moreover, $\Phi$ induces a fully faithful and exact functor
$$\Phi^{\ML}:\Pro_{\N}^{\ML}\Mod(A)\ra \Solid(A).$$
\end{thm}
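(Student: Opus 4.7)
The theorem packages three separate assertions, and I would treat them in order: that the composite $\Phi=\Phi_2\circ\Phi_1$ factors through $\Solid(A)$, that $\Phi$ is left exact, and that its restriction $\Phi^{\ML}$ is fully faithful and exact. The first three are formal consequences of the propositions already established in this section; the substantive content is the exactness of $\Phi^{\ML}$.

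\textbf{The formal parts.} To see that $\Phi$ factors through $\Solid(A)$, I would apply \Cref{Phi2commlim} to rewrite $\Phi(``\lim_\N"M_i)=\ul{\lim_\N M_i}\simeq\lim_\N\ul{M_i}$ in $\Cond(A)$; each discrete $\ul{M_i}$ is solid and $\Solid(A)\subset\Cond(A)$ is closed under arbitrary limits, so the limit lies in $\Solid(A)$. Left exactness of $\Phi$ follows from combining \Cref{liminProNC}(2) with \Cref{Phi1commlim}, which together give that $\Phi_1$ preserves finite limits, and \Cref{Phi2commlim}, which gives that $\Phi_2$ preserves all limits. For full faithfulness of $\Phi^{\ML}$, since $\N$ is totally ordered we have $\Pro^{\ML}_\N\Mod(A)\subset\Pro^{\ML}_{\rm t.o.}\Mod(A)$, so \Cref{Phi1FF} gives full faithfulness of $\Phi_1$ on this subcategory; composing with the fully faithful $\Phi_2$ of \Cref{Phi2FF} yields full faithfulness of $\Phi^{\ML}$.

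\textbf{Exactness — the main content.} Given a short exact sequence $0\to L\to M\to N\to 0$ in $\Pro^{\ML}_\N\Mod(A)$, left exactness of $\Phi$ already produces $0\to\Phi(L)\to\Phi(M)\to\Phi(N)$ exact in $\Solid(A)$, so the remaining task is to show that $\Phi(M)\to\Phi(N)$ is an epimorphism. My plan is: (i) represent the sequence by a levelwise short exact sequence $0\to L_i\to M_i\to N_i\to 0$ of discrete $A$-modules, using \cite[A. Corollary 3.2]{AM} together with cofinal-reindexing techniques standard in pro-categories; (ii) apply the exact functor $\ul{(-)}$ from discrete modules to $\Solid(A)$ levelwise and use the exactness of countable products in $\Solid(A)$ to obtain the long exact sequence of derived limits, thereby reducing epi-ness of $\lim_\N\ul{M_i}\to\lim_\N\ul{N_i}$ to the vanishing $R^1\lim_\N\ul{L_i}=0$ in $\Solid(A)$; (iii) identify this derived limit as the cokernel of the telescope map $\id-\phi:\prod_\N\ul{L_i}\to\prod_\N\ul{L_i}$ in $\Solid(A)$, and check its vanishing on sections over extremally disconnected profinite $S\in\Proj_\kappa$.

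\textbf{Transferring Mittag-Leffler to sections.} On such sections one has $\ul{\prod_\N L_i}(S)=\prod_\N\Hom_\Top(S,L_i)$, and the key observation is that compactness of $S$ together with discreteness of each $L_i$ forces every continuous map $S\to L_i$ to have finite image. Consequently, a continuous map $S\to\phi_{ji}(L_j)\subset L_i$ always admits a set-theoretic (and hence continuous, since its image is finite and the target is discrete) lift to $S\to L_j$. This transfers the Mittag-Leffler condition from $(L_i)$ upward to the $\N$-indexed tower of $A$-modules $(\Hom_\Top(S,L_i))$, and the classical Mittag-Leffler theorem in $\Mod(A)$ then yields $\lim^1=0$ on sections — precisely the vanishing required. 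The main obstacle I anticipate is step (i): producing a clean levelwise short exact representative in $\Pro^{\ML}_\N\Mod(A)$ that preserves the Mittag-Leffler structure is technical but standard. Everything else reduces cleanly to classical Mittag-Leffler via this compactness-discreteness interplay on sections.
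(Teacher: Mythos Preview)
Your argument is correct and, on the formal parts (factoring through $\Solid(A)$, left exactness, full faithfulness), it matches the paper's proof verbatim. For the right exactness of $\Phi^{\ML}$, the paper compresses everything into a single sentence: ``the Mittag-Leffler condition guarantees that the limit of condensed presheaves preserves surjections, and a further sheafification step will not destroy this.'' Your route via the telescope presentation of $R^1\lim_\N$ and the transfer of Mittag--Leffler to the towers $(\Hom_{\Top}(S,L_i))_i$ is precisely the mechanism that makes that sentence true: the paper's presheaf-then-sheafify phrasing and your evaluate-at-extremally-disconnected-$S$ phrasing are the same computation, since presheaf surjectivity is sectionwise surjectivity and evaluation at $S\in\Proj_\kappa$ is exact. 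The finite-image lifting you isolate (compact $S$, discrete $L_i$) is the substantive point the paper leaves to the reader.

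On your flagged obstacle in step (i): it is genuinely resolvable, and here is why. After straightening to a levelwise map $M_i\to N_i$, replace $N_i$ by $\Im(M_i\to N_i)$; since the pro-cokernel is zero this does not change the pro-object $N$. This yields a levelwise short exact sequence $0\to L_i\to M_i\to N_i\to 0$ with $L_i=\Ker(M_i\to N_i)$. The system $(L_i)$ then represents $L\in\Pro^{\ML}_\N\Mod(A)$, and Mittag--Leffler is invariant under pro-isomorphism for $\N$-indexed systems (reduce to the case where the reference ML system has surjective transitions and chase images through the pro-isomorphism), so $(L_i)$ itself is Mittag--Leffler. With that in hand your $R^1\lim$ vanishing goes through as written.
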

\begin{proof}
$\Phi$ is left exact by \Cref{Phi1commlim}, \Cref{Phi2FF} and the fact that limits in $\Solid(A)$  are also limits in $\Cond(A)$ (\cite[Proposition 7.5(i)]{CS}).
The fully faithfulness of $\Phi^{\ML}$ follows from \Cref{Phi1FF} and \Cref{Phi2FF}.
It is also right exact, because the Mittag-Leffler condition guarantees that the limit of condensed presheaves preserves surjections, and a further sheafification step will not destroy this.
It remains to show $\Phi$ factors through $\Solid(A)$.
Since discrete modules are solid, $\ul{M_i}\in \Solid(A)$ for all $i$.
Then $\ul{\lim_\N M_i}\stackrel{\ref{Phi2FF}}{=}\lim_\N \ul{M_i}\in \Solid(A)$, as $\Solid(A)\subset \Cond(A)$ is closed under all limits.
\end{proof}
\begin{rmk}
In \cite{MPV_TateCondv2}, the authors showed a result similar to our result on
$\Phi^{\ML}:\Pro^{\ML}_\N \Mod(A)\ra \Solid(A)$.
In short, they have shown that the category of $\aleph_0$-Tate objects of finitely generated projective $A$-modules fully faithfully embeds into $\Solid(A)$, and this embedding functor is also exact. 
The definition of Tate objects they use assumes that all the transition maps are surjective.
As the main application we have in mind is Deligne's $j_!$ for coherent sheaves, which always produces pro-objects with non-surjective (and also non-Mittag-Leffler) transition maps, we have to include a study on non-Mittag-Leffler pro-objects.
\end{rmk}

\begin{defn}\label{RPhiDef}
Define
$$R\Phi: \Pro_{\N}\Mod(A)\ra D(\Solid(A)),\quad ``\lim_\N" M_i\mapsto R\lim_\N \ul{M_i}.$$
\end{defn}

\section{Compatibility with $j_!$}

Before we proceed to the main theorem, we need some flatness statements.
\begin{prop}\label{flatmodvsflatcondmod}
Let $A$ be a noetherian ring. $f\in A$ is a nonzerodivisor.
Let $\hat A:=\lim_n A/f^n$, and we equip it with the inverse limit topology, where all $A/f^n$ are discrete. 
Then $\ul A\ra \ul{\hat A}$ is flat, i.e., $-\otimes_{\ul A} \ul {\hat A}:\Cond(A)\ra \Cond(A)$ is exact.
\end{prop}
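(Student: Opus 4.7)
The plan is to combine the identification of $\ul{\hat A}$ as a countable inverse limit in $\Cond(A)$ with a Tor computation that exploits $f$ being a nonzerodivisor. By \Cref{thm1} and \Cref{Phi2commlim}, we have $\ul{\hat A} \simeq \lim_n \ul{A/f^n}$ in $\Cond(A)$, and since the transition maps are surjective, this limit also agrees with $R\lim_n \ul{A/f^n}$.

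The nonzerodivisor hypothesis on $f$ then yields two short exact sequences in $\Cond(A)$: the projective resolution
$$0 \to \ul A \xrightarrow{f^n} \ul A \to \ul{A/f^n} \to 0$$
(using $f^n$ is a nonzerodivisor on $A$) and
$$0 \to \ul{\hat A} \xrightarrow{f^n} \ul{\hat A} \to \ul{A/f^n} \to 0$$
(using that $f^n$ remains a nonzerodivisor on $\hat A$, since a relation $f^n\cdot (x_m)_m = 0$ lifts componentwise to $A$ and forces $x_m = 0$ for $m \leq n$, and that $\hat A / f^n \hat A \cong A/f^n$ canonically). For any $M \in \Cond(A)$, the first sequence forces $\Tor_q^{\ul A}(M, \ul{A/f^n}) = 0$ for $q \geq 2$, $= M[f^n]$ for $q=1$, and $= M/f^n M$ for $q=0$. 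Applying $M \otimes^L_{\ul A} -$ to the second sequence and reading off the long exact sequence of Tor then shows that multiplication by $f^n$ is an isomorphism on $\Tor^{\ul A}_i(M, \ul{\hat A})$ for $i \geq 2$ and is at least injective for $i = 1$.

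To upgrade this to the actual vanishing $\Tor_i^{\ul A}(M, \ul{\hat A}) = 0$ for $i \geq 1$, I would invoke the spectral sequence associated to $\ul{\hat A} \simeq R\lim_n \ul{A/f^n}$,
$$E_2^{p,q} = R^p \lim_n \Tor_q^{\ul A}(M, \ul{A/f^n}) \Longrightarrow \Tor_{q-p}^{\ul A}(M, \ul{\hat A}),$$
whose only potentially nonzero terms lie in $p, q \in \{0,1\}$. Thus the vanishing reduces to $R^1 \lim_n M[f^n] = 0$, where the transitions $M[f^{n+1}] \to M[f^n]$ are given by multiplication by $f$. The main obstacle will be justifying both the validity of this spectral sequence in $\Cond(A)$ (i.e., the compatibility of the derived tensor product with countable derived limits in the condensed setting, which is not automatic) and the Mittag-Leffler behavior of $\{M[f^n]\}$ for arbitrary $M$. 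As a fallback, one can first handle the case of discrete $M$, where classical flatness of $\hat A$ over the Noetherian ring $A$ (Atiyah-Macdonald, Theorem 10.14) applies immediately, and then propagate to all of $\Cond(A)$ through a presentation argument expressing arbitrary condensed modules in terms of discrete ones.
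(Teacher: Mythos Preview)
Your approach has a genuine gap at the spectral-sequence step. The identification $\ul{\hat A}\simeq R\lim_n \ul{A/f^n}$ is correct, but the spectral sequence you write down would require the natural map
\[
M\otimes^L_{\ul A} R\lim_n \ul{A/f^n}\;\longrightarrow\; R\lim_n\bigl(M\otimes^L_{\ul A}\ul{A/f^n}\bigr)
\]
to be an equivalence for every $M\in\Cond(A)$. There is no reason for this: $-\otimes^L_{\ul A}-$ is a left adjoint in each variable and commutes with colimits, not with inverse limits. Without this comparison there is no spectral sequence computing $\Tor_*^{\ul A}(M,\ul{\hat A})$ from the groups $\Tor_*^{\ul A}(M,\ul{A/f^n})$, and the information you extract from the second short exact sequence (that $f^n$ acts injectively on $\Tor_1$ and bijectively on $\Tor_{\ge 2}$) does not by itself force any vanishing. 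Your fallback does not close the gap either. Even for a discrete $A$-module $M$, the condensed tensor $\ul M\otimes_{\ul A}\ul{\hat A}$ is \emph{not} computed by the classical $M\otimes_A\hat A$, because $\ul{\hat A}$ is not discrete: on an extremally disconnected $S$ one meets $\ul{\hat A}(S)$, which is much larger than $\hat A$. And arbitrary objects of $\Cond(A)$ are generated by the $\ul A[S]$ for profinite $S$, not by discrete modules, so the proposed ``presentation in terms of discrete modules'' does not exist.

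The paper's proof takes a completely different route that sidesteps all of this. Since the tensor product in $\Cond(A)$ is the sheafification of the presheaf tensor product and evaluation at an extremally disconnected $S$ is exact, flatness of $\ul{\hat A}$ over $\ul A$ reduces to flatness of the \emph{ordinary} ring map $\ul A(S)\to\ul{\hat A}(S)$ for each such $S$. Writing $B:=\ul A(S)=\colim_i A^{r_i}$ and $\hat B:=\ul{\hat A}(S)=\lim_n B/f^n$, the paper factors $B\to\hat B$ through $\hat A\otimes_A B$, which is flat over $B$ by the classical noetherian result, and then checks flatness of $\hat A\otimes_A B\to\hat B$ by hand via the equational criterion, using a Zorn-produced compatible system of bases of the $A^{r_i}$. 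No spectral sequences or limit--tensor interchanges are needed.
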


\begin{proof}
Fix an  uncountable strong limit cardinal $\kappa$ and let $S=\lim_{i\in I} S_i\in \Proj_\kappa$, where all $S_i$ are finite.
Since every profinite space is the inverse limit of its discrete quotient spaces (\cite[Proposition 1.1.7]{Wilson}), by possibly replacing the pro-system $\{S_i\}_{i\in I}$ with the pro-system of the finite quotients of $S$, we can assume that all transition maps are surjective.
We want to show that $\ul A(S)\ra \ul{\hat A}(S)$ is a flat ring map.

Since maps from a compact space to a discrete space are continuous if and only if they are locally constant, the ring $B := \ul{A}(S)$ is precisely the ring of locally constant functions.
To understand the module structure of $\widehat{B}:= \ul{\widehat{A}}(S)$, we apply a result of Specker and N\"obeling. 
By \cite[Theorem 5.4]{CS}, the group of continuous maps $\Hom_{\Top}(S, \Z)$ is a free abelian group. Let $E$ be a basis of $\Hom_{\Top}(S, \Z)$. For any discrete ring $R$, we have a canonical isomorphism $\Hom_{\Top}(S, R) \cong \Hom_{\Top}(S, \Z) \otimes_{\Z} R \cong \bigoplus_{e \in E} R$. 
Applying this to $R = A/f^n$, we obtain
\[
\widehat{B} := \ul{\widehat{A}}(S) = \lim_n \Hom_{\Top}(S, A/f^n) \cong \lim_n \bigoplus_{e \in E} A/f^n.
\]
This shows that $\widehat{B}$ is the $f$-adic completion of the free $\widehat{A}$-module $F := \bigoplus_{e \in E} \widehat{A}$. 
(Note that we did not claim any commutativity of $\lim_n$ with $\bigoplus_{e\in E}$.)
Since $A$ is noetherian, $\widehat{A}$ is noetherian. 
By \cite[\href{https://stacks.math.columbia.edu/tag/06LE}{Tag 06LE}]{stacks-project},
$\widehat{B}$ is flat over $\widehat{A}$. 
Since $A \to \widehat{A}$ is flat (\cite[Thm 8.8]{Matsumura}), the composition $A \to \widehat{A} \to \widehat{B}$ is flat.

Now we show that $B \to \widehat{B}$ is flat using the equational criterion of flatness (\cite[\href{https://stacks.math.columbia.edu/tag/00HK}{Tag 00HK}]{stacks-project}). Suppose we are given elements $c_l \in B$ and $x_l \in \widehat{B}$ ($l = 1, \dots, m$) satisfying the relation
\eq{flat1}{
\sum_{l=1}^m c_l x_l = 0 \quad \text{in } \widehat{B}.
}
% Recall that $B = \colim_i A^{r_i}$ consists of locally constant functions $S \to A$. 
Since there are only finitely many $c_l$, there exists a sufficiently large $i \in I$ such that all $c_l$ factor through $S_i$. This is equivalent to saying there is a finite partition of $S$ into clopen subsets $S = \bigsqcup_{j=1}^k U_j$ such that each $c_l$ is constant on each $U_j$. Let $\gamma_{l,j} \in A$ be the constant value of $c_l$ on $U_j$. Let $1_{U_j} \in B$ be the indicator function of $U_j$. 

Multiplying the relation \eqref{flat1} by $1_{U_j}$, we get $\sum_{l=1}^m c_l 1_{U_j} x_l = 0$ in $\widehat{B}$. Since $c_l 1_{U_j} = \gamma_{l,j} 1_{U_j}$, this becomes
\eq{flat2}{
\sum_{l=1}^m \gamma_{l,j} (1_{U_j} x_l) = 0.
}
This is a linear relation among the elements $1_{U_j} x_l \in \widehat{B}$ with coefficients $\gamma_{l,j} \in A$. Because we have already proved that $\widehat{B}$ is flat over $A$, the equational criterion implies that there exist elements $y_{j,h} \in \widehat{B}$ and coefficients $\alpha_{l,j,h} \in A$ such that
\[
1_{U_j} x_l = \sum_h \alpha_{l,j,h} y_{j,h} \quad \text{and} \quad \sum_{l=1}^m \gamma_{l,j} \alpha_{l,j,h} = 0
\]
for all $j$ and $h$. Now, define $a_{l, (j,h)} := \alpha_{l,j,h} 1_{U_j} \in B$. We can reconstruct $x_l$ as a finite sum:
\[
x_l = 1 \cdot x_l = \sum_{j=1}^k 1_{U_j} x_l = \sum_{j=1}^k \sum_h a_{l, (j,h)} y_{j,h}.
\]
Furthermore, the coefficients of \eqref{flat1} satisfy
\[
\sum_{l=1}^m c_l a_{l, (j,h)} = \sum_{l=1}^m c_l \alpha_{l,j,h} 1_{U_j} = \sum_{l=1}^m \gamma_{l,j} \alpha_{l,j,h} 1_{U_j} = \left( \sum_{l=1}^m \gamma_{l,j} \alpha_{l,j,h} \right) 1_{U_j} 
\stackrel{\eqref{flat2}}{=} 0.
\]
This verifies the equational criterion for the map $B \to \widehat{B}$. 
Hence $B \to \widehat{B}$ is flat.
\end{proof}

\begin{lem}\label{locflat}
Let $A$ be a ring, $T\subset A$ be a multiplicatively closed subset.
Then map $\ul A\ra \ul{T^{-1}A}$ is flat, i.e., $-\otimes_{\ul A} \ul{T^{-1}A}:\Cond(A)\ra \Cond(A)$ is exact.
\end{lem}
\begin{proof}
We provide two proofs for this.

(First proof)
Because $T$ is multiplicative, the colimit $ T^{-1}A = \colim_{t \in T} A $ is filtered.
Since $\ul{(-)}$ preserves filtered colimits of discrete modules (by replacing the filtered inductive system by its image in the colimit, we achieve a new filtered inductive system with the same colimit and of which all transition maps are open embeddings. Hence this is a special case of \Cref{ColimOpenImm}), we get
$$\ul{T^{-1}A}=\colim_{t\in T} \ul A.$$
Because tensor product commutes with all colimits, we conclude that $-\otimes_{\ul A} \ul{T^{-1}A}$ is exact.

(Second proof)
For any $S \in \Prof_\kappa$, we claim that
$$T^{-1}(\ul A(S))=\ul{T^{-1}A}(S).$$
\textit{Surjectivity.}
Because  $T^{-1}A$ is discrete, any map $f$ from a compact space $S$ will only have a finite image. 
Let $\{\frac{a_1}{t_1},\dots,\frac{a_n}{t_n}\}$ be the image of $f$.
Let $t=t_1\dots t_n$.
This is still in $T$ because $T$ is multiplicatively closed.
We can rewrite the image set of $f$ as $\{\frac{b_1}{t},\dots,\frac{b_n}{t}\}$.
Let $g:=tf$. 
Maps from a compact space to a discrete space are continuous if and only if they are locally constant.
As $f$ is continuous, it is locally constant.
As $g$ is a scalar of $f$, it is also locally constant. 
Therefore $g$ is continuous.
This means $\frac{g}{t}$ is a preimage of $f$. This proves the surjectivity.

\noindent
\textit{Injectivity.}
Let $g:S\ra A$ be a continuous function such that $\frac{g(s)}{t}=0$ for all $s\in S$. 
Since $g$ is a continuous function from a compact space to a discrete space, it  has a finite image, say $\{b_1,\dots, b_n\}$.
Then $\{\frac{b_1}{t},\dots, \frac{b_n}{t}\}$ is the image set of $\frac{g}{t}$.
As $\frac{g}{t}$ is the zero map, there exist $t_1,\dots,t_n \in T$ such that $t_ib_i=0$ for all $i=1,\dots,n$.
Let $t':=t_1\dots t_n$. Then $t'g(s)=0$ for all $s\in S$.
This means $\frac{g}{t}$ is the zero map, namely $\frac{g}{t}=0$ in $T^{-1}(\ul A(S))$. 
\end{proof}

To simplify notations, from now on we omit the ``derived'' part from the notations, except for the tensor product (e.g. $\otimes_{\ul A}$, $\otimes_{A_\bbox}$, $\otimes_{(A[T],A)_\bbox}$, etc.) and $R\lim$. All the other functors in this section  shall be read as derived, such as $\Phi$ refers to $R\Phi$, $f^*$ refers to $Lf^*$, etc..

\begin{thm}\label{thm2}
Let $A$ be a finite type $\Z$-algebra, and $f\in A$ be a nonzerodivisor. Let $j:\Spec A[\frac{1}{f}]\hra \Spec A$ be the open immersion.
Then the natural functor 
$$\Phi:\Pro_{\N}\Mod(A)\subset D(\Pro_{\N}\Mod(A)) \ra D(\Solid(A))$$
given in \Cref{RPhiDef} is compatible with the $j_!$ defined by Deligne \cite[Appendix]{Hartshorne_RD} and the one defined by Clausen-Scholze \cite[Lecture VIII]{CS} for finitely presented modules.
That is, 
the diagram
\eq{thmeq1}{\xymatrix{
\Mod^{\fp}(A[\frac{1}{f}])\ar[r]^{\Phi}\ar[d]^{j^{\De}_!}&
D(\Solid(A[\frac{1}{f}]))\ar[d]^{j^{\CS}_!}\\
\Pro_{\N}\Mod^{\fp}(A)\ar[r]^{\Phi}&D(\Solid(A))
}}
is commutative.
\end{thm}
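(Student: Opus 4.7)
The plan is to unpack both sides of \eqref{thmeq1} on a fixed $M\in\Mod^{\fp}(A[\tfrac{1}{f}])$ and exhibit a natural equivalence in $D(\Solid(A))$. First I would choose an fp extension $\tilde M\in\Mod^{\fp}(A)$ with $\tilde M[\tfrac{1}{f}]\simeq M$, possible since $A$ is Noetherian and $M$ is fp. Deligne's recipe then reads $j^{\De}_! M = ``\lim_n" f^n\tilde M\in\Pro_\N\Mod^{\fp}(A)$, so applying the $R\Phi$ of Definition \ref{RPhiDef} gives $\Phi(j^{\De}_! M) = R\lim_n \ul{f^n\tilde M}\in D(\Solid(A))$.

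The main tool on the Deligne side is the short exact sequence in the abelian category $\Pro_\N\Mod^{\fp}(A)$ (Lemma \ref{liminProNC}(3))
$$0 \longrightarrow ``\lim_n" f^n\tilde M \longrightarrow \tilde M \longrightarrow ``\lim_n" \tilde M/f^n\tilde M \longrightarrow 0,$$
where $\tilde M$ is viewed as a constant pro-system. The quotient pro-system has surjective transition maps and thus lies in $\Pro_\N^{\ML}\Mod^{\fp}(A)$; by Theorem \ref{thm1}, $\Phi$ is left exact and $\Phi^{\ML}$ is exact, so applying $R\Phi$ yields a fiber sequence
$$\Phi(j^{\De}_! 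M) \longrightarrow \ul{\tilde M} \longrightarrow \lim_n \ul{\tilde M/f^n\tilde M} \,=\, \ul{\tilde M^\wedge_f}$$
in $D(\Solid(A))$, where the last equality uses Proposition \ref{Phi2commlim} and $\tilde M^\wedge_f$ denotes the classical $f$-adic completion. On the Clausen-Scholze side, the open-closed localization sequence attached to $j$, applied to the natural lift $\ul{\tilde M}$ of $\ul M$, reads
$$j^{\CS}_!(\Phi(M)) = \operatorname{fib}\bigl(\ul{\tilde M} \longrightarrow \ul{\tilde M}\otimes^L_{A_\bbox}\hat A_{f,\bbox}\bigr),$$
where $\hat A_{f,\bbox}$ denotes the derived $f$-adic completion of $A$ as an analytic ring.

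The comparison thus reduces to a natural identification $\ul{\tilde M^\wedge_f}\simeq \ul{\tilde M}\otimes^L_{A_\bbox}\hat A_{f,\bbox}$. Proposition \ref{flatmodvsflatcondmod} says that $\ul A\to\ul{\hat A_f}$ is flat, so the derived tensor product collapses to the underived one. An fp presentation $A^m\to A^n\to \tilde M\to 0$ tensored with $\ul{\hat A_f}$, combined with right exactness and the evident identification $\ul{A^n}\otimes_{\ul A}\ul{\hat A_f}=\ul{(A^n)^\wedge_f}$, then yields the desired isomorphism. Independence of the choice of $\tilde M$ is automatic, since any two fp extensions of $M$ give pro-isomorphic Deligne systems.

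The main obstacle I anticipate is the precise match between the analytic-ring tensor product $-\otimes^L_{A_\bbox}\hat A_{f,\bbox}$ in the CS formula and the classical $f$-adic completion $\lim_n \tilde M/f^n\tilde M$ appearing on the Deligne side. Although Proposition \ref{flatmodvsflatcondmod} reduces this to an underived statement, carefully checking that the two resulting fiber sequences agree naturally in $M$ — not merely up to non-canonical isomorphism — will require some bookkeeping with the universal properties of $j^{\CS}_!$ and $j^{*,\CS}$, and the compatibility of the localization sequence with the inclusion $\Solid(A)\hra\Cond(A)$.
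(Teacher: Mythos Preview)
Your Deligne-side computation is correct and matches the paper exactly: both arrive at the fiber sequence
\[
\Phi(j^{\De}_! M)\;\longrightarrow\;\ul{\tilde M}\;\longrightarrow\;\lim_n \ul{\tilde M/f^n\tilde M}.
\]
The gap is on the Clausen--Scholze side. You invoke an ``open--closed localization sequence'' to write
\[
j^{\CS}_!(\ul M)\;=\;\operatorname{fib}\bigl(\ul{\tilde M}\longrightarrow \ul{\tilde M}\otimes^L_{A_\bbox}\hat A_{f,\bbox}\bigr),
\]
but this is not what one obtains directly from \cite[Lecture VIII]{CS}. What is actually available there is the explicit description of $j_!$ via the adic factorization through $\Spa(A[T],A)$ and the Laurent-series kernel $(A((T^{-1}))/A[T])[-1]$; working this out (as the paper does) yields instead the fiber sequence
\[
j^{\CS}_!(\ul M)\;\longrightarrow\;\ul{M}\;\longrightarrow\;\bigl(\lim_n \ul{\tilde M/f^n\tilde M}\bigr)[\tfrac{1}{f}],
\]
with $\ul M$ (not $\ul{\tilde M}$) as the middle term and the completion \emph{inverted at $f$} as the target. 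Your claimed sequence and this one are genuinely different diagrams, and identifying their fibers is precisely the nontrivial comparison that the paper carries out: one must check that the kernels and cokernels of $\ul{\tilde M}\to\lim_n\ul{\tilde M/f^n}$ and of $\ul M\to(\lim_n\ul{\tilde M/f^n})[1/f]$ agree as objects of $\Cond(A)$, using \Cref{IndProExact} to pass to topological modules and then an elementary argument.

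In short, the ``localization sequence'' you want, with $i_*i^*$ identified as $f$-adic completion, is a \emph{consequence} of the computation of $j^{\CS}_!$, not an input to it; assuming it amounts to assuming the theorem. Either justify that sequence independently (which will require essentially the same Laurent-series calculation), or compute $j^{\CS}_!$ directly from \cite[Theorem 8.13]{CS} and then supply the kernel/cokernel comparison between the two fiber sequences. Proposition~\ref{flatmodvsflatcondmod} is indeed the right tool, but it enters in identifying $\ul{\tilde M}\otimes_{\ul A}\ul{\hat A_f}$ with $\lim_n\ul{\tilde M/f^n}$ inside the paper's computation, not in collapsing a localization sequence you have not yet established.
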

\begin{proof}
To show that the diagram commutes, we will construct a natural transformation
$$\eta: \Phi j_!^{\De} \to j_!^{\CS} \Phi$$ 
on $\Mod^{\fp}(A[\frac{1}{f}])$ 
and show it is a natural isomorphism.
To this end, we first describe the two functors $\Phi j_!^{\De}$ and $j_!^{\CS} \Phi$ at a given module $M$.

Take $M\in \Mod^{\fp}(A[\frac{1}{f}])$.
Then as $A$ is noetherian, there exists an $A$-submodule $M'\in \Mod^{\fp}(A)$ of $M$, such that 
$$j^*M' =M'\otimes_A A[\frac{1}{f}]= M$$
in $\Mod(A[\frac{1}{f}]).$
(In fact: since $M$ is the colimit of its finitely generated $A$-submodules, there exists one single $A$-submodule such that it contains all the finitely many generators of $M$ as an $A[\frac{1}{f}]$-module.
As $A$ is noetherian, any finitely generated $A$-module is automatically finitely presented.)

For each $n$, $0\ra f^n M'\ra M'\ra M'/f^nM'\ra 0$ is an exact sequence of discrete $A$-modules.
Since the functor assigning a condensed module to a discrete module is exact 
(as discrete modules form a full exact subcategory of locally compact abelian groups and hence a full exact subcategory of condensed modules, see \cite[Corollary 4.9]{CS}), the sequence
$$0\ra \ul{f^n M'}\ra \ul M'\ra \ul{M'/f^nM'}\ra 0$$
is exact in $\Cond(A)$. Therefore
$$R\lim_n \ul{f^n M'}\ra \ul M'\ra R\lim_n \ul{M'/f^nM'}=\lim_n \ul{M'/f^nM'}\xra{+1} $$
is a distinguished triangle.
Hence we have
\eq{thmeqDe2}{\Phi(j_!^{\De}M)= \Phi(``\lim_n"f^nM')=R\lim_n \ul{f^n M'}
=
{\rm Cone}(\ul M'\ra \lim_n \ul{M'/f^nM'})[-1].}

The image of the schematic morphism $j$  in the category of adic spaces (via Huber's fully faithful functor $t$, see \cite[Proposition 4.2(i)]{Huber_gen}), factors canonically as a composition of two morphisms of adic spaces, namely
$$t(\Spec A[\frac{1}{f}])=\Spa(A[\frac{1}{f}],A[\frac{1}{f}])
\xra{j_1}
\Spa(A[\frac{1}{f}],A)
\xra{j_2}
\Spa(A,A)=t(\Spec A).$$
Hence
$j_!^{\CS}$ factors as
$$D(A[\frac{1}{f}]_\bbox)
\xra{j_{1!}}
D((A[\frac{1}{f}],A)_\bbox)
\xra{j_{2!}}
D(A_\bbox),
$$
where $j_{2!}=j_{2*}$ is the forgetful functor.
$j_{1!}$ and $j_{2!}$ fit into the following commutative diagram
$$\xymatrix{
D(A[T]_\bbox)\ar[r]^{j'_{1!}}\ar@/^5ex/[rr]^{j'_!}
&
D((A[T],A)_\bbox)
\ar[r]^(0.6){j'_{2!}}
&
D(A_\bbox)
\\
D(A[\frac{1}{f}]_\bbox)
\ar[u]^{h'_!}
\ar[r]^{j_{1!}}\ar@/_4ex/[rr]_{j^{\CS}_!}&
D((A[\frac{1}{f}],A)_\bbox)
\ar[u]_{h_!}\ar[r]^(0.6){j_{2!}}
&
D(A_\bbox).\ar@{=}[u]
}$$
Here
$j_1'$ is the canonical map $\Spa(A[T],A[T])\ra \Spa(A[T],A)$, and $j'_{1!}$ is given by $$(-)\otimes^L_{(A[T],A)_\bbox} (\ul{A((T^{-1}))}/\ul{A[T]})[-1],$$
as introduced in the proof of \cite[Theorem 8.13(ii)]{CS}, and $A((T^{-1}))$ is regarded as a topological $A[T]$-module with the colimit topology of the $T^{-1}$-adic topology of $A[[T^{-1}]]$.
The vertical maps, $h_!,h'_!$ are induced from $A[T]\ra A[\frac{1}{f}]$, mapping $T$ to $1/f$.
As this ring map is surjective, $A[\frac{1}{f}]_\bbox=(A[\frac{1}{f}],A[T])_\bbox.$
In particular, $h_!=h_*$ and $h'_!=h'_*$, and hence they both are forgetful functors.
We have
$$j^{\CS}_{!}=j_{2!}j_{1!}= j'_{2!}j'_{1!}h'_!.$$

Let  $M\in \Mod^{\fp}(A[\frac{1}{f}])$, $M'\in \Mod(A)$ be as before. 
Before we proceed, we discuss some algebraic and topological properties of $f$-adic completions and localizations, and their compatibility with the condensed setting.
Let $\hat M' := \lim_n M'/f^nM'$ be the $f$-adic completion of $M'$ equipped with the inverse limit topology.
By \Cref{Phi2FF}, we have 
$$\ul{\hat M'} = \lim_n \ul{M'/f^nM'}.$$
Let $\hat M'[\frac{1}{f}]$ be the algebraic localization of $\hat M'$, and we equip it with the colimit topology of the inductive system of topological modules $\hat M'\xra{f}\hat M'\xra{f}\cdots$.
Because $M$ is an $A[\frac{1}{f}]$-module, it has no $f$-torsion.
Since $M'$ is an $A$-submodule of $M$, it has no $f$-torsion  as well. 
That is, $M'\xra{f} M'$ is injective. 
Since $A$ is noetherian, $\hat A$ is flat over $A$.
The base change of this map along the flat map $A\ra \hat A$ is injective.
Since $M'$ is finitely presented as an $A$-module, $\hat M'=M'\otimes_A \hat A$.
That is, $\hat M'$ is $f$-torsionfree.
Hence the transition maps in the inductive system $\hat M' \xrightarrow{f} \hat M' \xrightarrow{f} \cdots$ are isomorphisms onto the open subgroup $f\hat M'$ in the codomain.
Thus, $\hat M'[\frac{1}{f}]$ is the topological colimit of a sequence of open embeddings. 
By \Cref{ColimOpenImm},
the functor $\ul{(-)}: \TopMod(A) \to \Cond(A)$ preserves such filtered colimits, and hence gives
$$\ul{\hat M'}[\frac{1}{f}] \simeq \ul{\hat M'[\frac{1}{f}]}.$$
To conclude, we have canonical isomorphisms in $\Cond(A)$ which we will be using freely:
\eq{thmeqCSblue}{ (\lim_n \ul{M'/f^nM'})[\frac{1}{f}] = \ul{\hat M'}[\frac{1}{f}]= \ul{\hat M'[\frac{1}{f}]}. }

We continue with the computation of $j_!^{\CS}(\Phi(M))$.
Since $\ul{A[T]}$ is a projective object in $\Solid((A[T],A)_\bbox)$, we have
\[\ul{A[\frac{1}{f}]}\otimes^L_{(A[T],A)_\bbox} \ul{A[T]}
=\ul{A[\frac{1}{f}]}\otimes_{(A[T],A)_\bbox} \ul{A[T]}
=\ul{A[\frac{1}{f}]}.
\numberthis \label{thmeqCS1}\]
Since 
$$0\ra \ul{A[T]}\xra{fT-1} \ul{A[T]}\xra{T\mapsto 1/f} \ul{A[\frac{1}{f}]}\ra 0$$
is a projective resolution of $\ul{A[\frac{1}{f}]}$ in $\Solid((A[T],A)_\bbox)$,
we can compute:
\begin{align*} 
\numberthis \label{thmeqCS2}
\ul{A[\frac{1}{f}]}\otimes^L_{(A[T],A)_\bbox} \ul{A((T^{-1}))}
&=\left(\ul{A((T^{-1}))}
\xra{T^{-1}-f}   \ul{A((T^{-1}))}\right)
\\&
=\ul{A((T^{-1}))/(T^{-1}-f)}
\\&
=\ul{\left(A[[T^{-1}]]/(T^{-1}-f)\right)[\frac{1}{f}]}
\\&
=\ul{\hat A[\frac{1}{f}]}.
\end{align*}
We explain the reasoning at each step:
\begin{itemize}
\item 
In the first equality, we have used the given projective resolution of $\ul{A[\frac{1}{f}]}$, the identification of the bifunctor $-\otimes_{(A[T],A)_\bbox}-=(-\otimes_{\ul{A[T]}}-)\otimes_{\ul{A[T]}}A_\bbox$ and
$\ul{A((T^{-1}))}\in \Solid((A[T],A)_\bbox)$. 
\item 
In the second equality, we have used the following short exact sequence in $\Cond(A)$:
$$0\ra \ul{A((T^{-1}))}\xra{T^{-1}-f}\ul{A((T^{-1}))}\ra \ul{A((T^{-1}))/(T^{-1}-f)} \ra 0.$$
The corresponding algebraic sequence is clearly exact as modules. 
Since $T^{-1}-f$ is a closed embedding, it is a strict exact sequence in the category of topological modules.
Since $\ul{(-)}$ commutes with limits, the condensed sequence is left exact. 
Since $A((T^{-1}))$ is a  colimit of $A[[T^{-1}]]$ via open embeddings, and $A[[T^{-1}]]$ is sequential complete, $T_1$ and first countable, $A((T^{-1}))$ satisfy these properties as well. 
By \Cref{ColimCok}, the condensed sequence is right exact.
\item 
The rest are purely algebraic: localizations commute with quotients, and for finitely generated modules over a noetherian ring, completion commutes with quotients by \cite[Theorem 8.1(ii)]{Matsumura}.
\end{itemize}
Because 
$$0\ra \ul{A[T]}\ra \ul{A((T^{-1}))}\ra \ul{A((T^{-1}))}/\ul{A[T]}\ra 0 $$
is naturally exact, we can replace $\ul{A((T^{-1}))}/\ul{A[T]}$ with the two term complex $\ul{A[T]}\ra \ul{A((T^{-1}))}$ in the derived category and compute
\begin{align*}
\numberthis \label{thmeqCS3}
&\ul{A[\frac{1}{f}]}\otimes^L_{(A[T],A)_\bbox} (\ul{A((T^{-1}))}/\ul{A[T]})
\\
&={\rm Cone}\left(
\ul{A[\frac{1}{f}]}\otimes^L_{(A[T],A)_\bbox} \ul{A[T]}
\xra{}
\ul{A[\frac{1}{f}]}\otimes^L_{(A[T],A)_\bbox} \ul{A((T^{-1}))}
\right)
\\
&
={\rm Cone}\left( \ul{A[\frac{1}{f}]}\ra \ul{\hat A[\frac{1}{f}]}\right)
\quad\text{(by \eqref{thmeqCS1} and \eqref{thmeqCS2})}.
\end{align*}
Moreover, since $M'\in \Mod^{\fp}(A)$, we have
\begin{align*} 
\numberthis \label{thmeqCS4}
\ul M'\otimes^L_{A_\bbox} \ul{\hat A[\frac{1}{f}]}
&=\ul M'\otimes^L_{A_\bbox} \ul{\hat A} \otimes^L_{A_\bbox}\ul{A[\frac{1}{f}]}
\\
&=\left(\left(\ul M'\otimes^L_{\ul A} \ul{\hat A} \right)\otimes^L_{\ul A} A_\bbox\right)
\otimes^L_{A_\bbox}\ul{A[\frac{1}{f}]}
\\
&=\left(\left(\ul M'\otimes_{\ul A} \ul{\hat A} \right)\otimes^L_{\ul A} A_\bbox\right)
\otimes^L_{A_\bbox}\ul{A[\frac{1}{f}]}
\\&=
\left(\ul{\hat M'} \otimes^L_{\ul A} A_\bbox\right)
\otimes^L_{A_\bbox}\ul{A[\frac{1}{f}]}
\\&=
\ul{\hat M'}
\otimes^L_{A_\bbox}\ul{A[\frac{1}{f}]}
\\&=
\ul{\hat M'[\frac{1}{f}]}
\end{align*}
We explain the reasoning at each step:
\begin{itemize}
\item 
The first equality follows from the derived $A$-solidification of the identification
$$\ul{\hat A[\frac{1}{f}]}
=\ul{\hat A}\otimes_{\ul A}\ul{ A[\frac{1}{f}]}
= \ul{\hat A}\otimes^L_{\ul A}\ul{ A[\frac{1}{f}]}.$$
The last equality here comes from the flatness in either the second slot \Cref{locflat}, or the first slot \Cref{flatmodvsflatcondmod}.
\item 
The second equality comes from the identification $-\otimes^L_{A_\bbox}-=(-\otimes^L_{\ul A}-)\otimes^L_{\ul A} A_\bbox$.
This identification follows from the uniqueness statement of $-\otimes^L_{A_\bbox}-$ in \cite[Theorem 7.5(ii)]{CS}.
\item 
The third equality follows from \Cref{flatmodvsflatcondmod}.
\item 
The fourth equality follows from applying \cite[Theorem 8.7]{Matsumura} on each section and then sheafifying.
\item 
The fifth equality comes from the fact that $\hat M'$ is a solid $A$-module, as it is the limit of discrete modules.
\item 
The sixth equality follows from the following identifications:
\begin{align*} 
\hat M'\otimes^L_{A_\bbox}\ul{A[\frac{1}{f}]}
=\left(\ul{\hat M'}\otimes^L_{\ul A}\ul{A[\frac{1}{f}]}\right)\otimes^L_{\ul A} A_\bbox
=\left(\ul{\hat M'}\otimes_{\ul A}\ul{A[\frac{1}{f}]}\right)\otimes^L_{\ul A} A_\bbox
\\
=\ul{\hat M'[\frac{1}{f}]}\otimes^L_{\ul A} A_\bbox
=\ul{\hat M'[\frac{1}{f}]}.
\end{align*}
At the second equality here we have used that $\ul A\ra \ul{A[\frac{1}{f}]}$ is flat (\Cref{locflat}).
The third equality holds because colimits commutes with tensor, and then we use the identification \eqref{thmeqCSblue}.
The last equality holds because $\ul{\hat M'[\frac{1}{f}]}$ is $A$-solid; and this is because it is the colimit of $\hat M'$, which is $A$-solid.
\end{itemize}
Altogether,
\begin{align*} 
\numberthis \label{thmeqCS5}
j_!^{\CS}(\Phi(M))&= 
j_!^{\CS}(\ul M)=
j_!^{\CS}(j^*\ul M')
=\ul M'\otimes^L_{A_\bbox} j^{\CS}_!\ul{A[\frac{1}{f}]}
\quad \text{(\cite[Theorem 8.13(iii)]{CS})}
\\
&=\ul M'\otimes^L_{A_\bbox}\left(\ul{A[\frac{1}{f}]}\otimes^L_{(A[T],A)_\bbox} (\ul{A((T^{-1}))}/\ul{A[T]})\right)[-1]
\\
&=\ul M'\otimes^L_{A_\bbox}
{\rm Cone}\left( \ul{A[\frac{1}{f}]}\ra \ul{\hat A[\frac{1}{f}]}\right)[-1]
\quad\text{(\Cref{thmeqCS3})}
\\
&=\left(\ul M'\otimes^L_{\ul A}
{\rm Cone}\left( \ul{A[\frac{1}{f}]}\ra \ul{\hat A[\frac{1}{f}]}\right)\otimes^L_{\ul A}A_\bbox\right)[-1]
\\
&=
\left({\rm Cone}\left( \ul M'\otimes_{\ul A}\ul{A[\frac{1}{f}]}\ra \ul M'\otimes_{\ul A} \ul{\hat A[\frac{1}{f}]}\right)\otimes^L_{\ul A}A_\bbox\right)[-1]
\text{\quad (\Cref{locflat})}
\\
&={\rm Cone}(\ul M\ra \ul{\hat M'[\frac{1}{f}]})[-1]
\quad\text{(\Cref{thmeqCS4})}
\end{align*}

The computations in \eqref{thmeqDe2} and \eqref{thmeqCS5} yield canonical identifications in $D(\Solid(A))$:
\begin{align*}
\Phi(j_!^{\De}(M)) &\simeq {\rm Cone}\left(\ul M' \to \ul{\hat M'}\right)[-1], \\
j_!^{\CS}(\Phi(M)) &\simeq {\rm Cone}\left(\ul M \to \ul{\hat M'[\frac{1}{f}]}\right)[-1].
\end{align*}
The canonical inclusions $M' \hra M$ and $\hat M' \hra \hat M'[\frac{1}{f}]$ induce a strictly commutative square of topological modules, which upon applying the exact functor $\ul{(-)}$ yields a commutative square in $\Cond(A)$:
\eq{nat_transf-diag}{
\xymatrix{
\ul M' \ar[r] \ar[d] & \ul{\hat M'} \ar[d] \\
\ul M \ar[r] & \ul{\hat M'[\frac{1}{f}]}
}
}
By the functoriality of the cone, this commutative square naturally induces a morphism between the shifted cones:
\eq{nat_transf_eq}{
{\rm Cone}\left(\ul M' \to \ul{\hat M'}\right)[-1] \longrightarrow {\rm Cone}\left(\ul M \to \ul{\hat M'[\frac{1}{f}]}\right)[-1].
}
This induced morphism defines
$$\eta_M:\Phi j_!^{\De}(M) \to j_!^{\CS} \Phi(M).$$
The naturality in $M$ follows from the construction.
Therefore we obtain a natural transformation 
$$\eta:\Phi j_!^{\De} \to j_!^{\CS} \Phi.$$

To conclude the proof, we must show that the morphism \eqref{nat_transf_eq} is an isomorphism. 
For a square-shaped commutative diagram, the morphism between the horizontal cones is an isomorphism if and only if the morphism between the vertical cones is an isomorphism. 
We apply this to diagram \eqref{nat_transf-diag}:
\begin{itemize} 
\item 
The left vertical map $\ul{M'} \to \ul{M}$ is induced by the inclusion of discrete modules $M' \hra M$. 
Since this map is a strict monomorphism between discrete modules, and its cone is simply the underline of the cokernel  of the corresponding discrete modules  by \Cref{IndProExact0Lem}.
\item 
The right vertical map $\ul{\hat M'} \to \ul{\hat M'[\frac{1}{f}]}$ is induced by the inclusion $\hat M' \hra \hat M'[\frac{1}{f}]$. 
Since $\hat M'$ is $f$-torsionfree and $\hat M'[\frac{1}{f}]$ is equipped  with the colimit topology,  this map is an open immersion.
In particular, it is a strict monomorphism.
Since $\ul{(-)}$ commutes with all limits by \Cref{Phi2FF},
$\ul{\hat M'} \to \ul{\hat M'[\frac{1}{f}]}$ is a monomorphism.
Since $\hat M'$ is sequentially complete and first countable, and $\hat M'[\frac{1}{f}]$ is the colimit of $\hat M'$ along open immersions, $\hat M'[\frac{1}{f}]$ is sequentially complete, $T_1$ and first countable as well.
By \Cref{ColimCok} the cokernel of $\ul{\hat M'} \to \ul{\hat M'[\frac{1}{f}]}$ is $\ul{\hat M'[\frac{1}{f}]/\hat M'}$, which also serves as its cone.
\end{itemize}
Therefore, the map between the vertical cones is 
\eq{thmeq2}{ 
\ul{M/M'} \longrightarrow \ul{\hat M'[\frac{1}{f}] / \hat M'}. }
Since both modules are discrete, this is an isomorphism in $\Cond(A)$ if and only if 
$$ M/M'\ra \hat M'[\frac{1}{f}] / \hat M'$$
is an algebraic isomorphism.

For surjectivity, let $z \in \hat M'[\frac{1}{f}]$. We can write $z = y/f^k$ for some $y \in \hat M'$ and $k \ge 0$.
Since the image of $M'$ is dense in $\hat M'$ with respect to the $f$-adic topology, we can write $y = x + f^k y'$ for some $x \in M'$ and $y' \in \hat M'$.
Then $z = (x + f^k y')/f^k = x/f^k + y'$.
Modulo $\hat M'$, the element $z$ is equivalent to $x/f^k$. Since $x/f^k \in M$, it lies in the image of \eqref{thmeq2}, which proves the surjectivity.

For injectivity, suppose $m \in M$ maps to $0$ in $\hat M'[\frac{1}{f}] / \hat M'$.
This means $m \in \hat M'$ inside $\hat M'[\frac{1}{f}]$.
Since $m \in M$, we can write $m = x/f^k$ for some $x \in M'$ and $k \ge 0$.
Since $m \in \hat M'$, we have $x \in f^k \hat M'$.
Thus $x \in M' \cap f^k \hat M'$.
Because $\hat M'/f^k \hat M' \cong M'/f^k M'$, the preimage of $f^k \hat M'$ under the natural map $M' \to \hat M'$ is exactly $f^k M'$.
Hence $x = f^k x'$ for some $x' \in M'$.
This gives $m = x/f^k = x' \in M'$, meaning $m$ represents $0$ in $M/M'$. This proves the injectivity.

This means that \eqref{thmeqDe2} and \eqref{thmeqCS5} agree, and we are done with the proof of the theorem.

\end{proof}

\bibliographystyle{alpha}
\bibliography{Cond-DRW}
\end{document}